\newtheorem{theorem}{Theorem}[section]
\newtheorem{corollary}[theorem]{Corollary}
\newtheorem{lemma}[theorem]{Lemma}
\newtheorem{proposition}[theorem]{Proposition}
\theoremstyle{definition}
\newtheorem{question}[theorem]{Question}
\newtheorem{definition}[theorem]{Definition}
\newtheorem{example}[theorem]{Example}
\newtheorem{remark}[theorem]{Remark}
\newtheorem*{ack}{Acknowledgement}
\newcommand{\Alex}{\operatorname{Alex}}
\newcommand{\Maps}{\operatorname{Map}}
\newcommand{\Conj}{\operatorname{Conj}}
\newcommand{\Core}{\operatorname{Core}}
\newcommand{\im}{\operatorname{Im}}
\newcommand{\T}{\operatorname{T}}
\newcommand{\R}{\operatorname{R}}
\begin{document}

\author{V.\,G.~Bardakov, D.\,A.~Fedoseev}
\title{
 Invariants of Handlebody-links and Spatial Graphs
}

\begin{abstract}
A {\em $G$-family of quandles} is an algebraic construction which was proposed by A.~Ishii, M.~Iwakiri, Y.~Jang, K.~Oshiro in 2013. The axioms of these algebraic systems were motivated by handlebody-knot theory. In the present work we investigate possible constructions which generalise $G$-family of quandles and other similar constructions (for example, $Q$- and $(G,*,f)$-families of quandles). We provide the necessary conditions under which the resulting object (called an {\em $(X,G,\{*_g\},f,\otimes,\oplus)$-system}) gives a colouring invariant of knotted handlebodies. 

We also discuss several other modifications of the proposed construction, providing invariants of spatial graphs with an arbitrary (finite) set of values of vertex valency. Besides, we consider several examples which in particular showcase the differences between spatial trivalent graph and handlebody-link theories.
\end{abstract}

\maketitle 

\tableofcontents

\section{Introduction}

A spatial graph is a finite graph embedded in the 3-sphere $\mathbb{S}^3$. Two spatial graphs are equivalent if there is an isotopy of $\mathbb{S}^3$ taking one onto the other. S.~Suzuki \cite{S} introduced the notion of the {\em neighborhood equivalence} for spatial graphs. Two spatial graphs are neighborhood equivalent if there is an isotopy of $\mathbb{S}^3$ taking a regular neighborhood of one graph onto that of the other. On the other hand, any handlebody embedded in $\mathbb{S}^3$ is a regular neighborhood of some spatial trivalent graph. Hence, there is a one-to-one correspondence between the set of handlebody-links and that of neighborhood equivalence classes of spatial connected trivalent graphs.

It is known that two spatial trivalent graphs are equivalent if and only if their diagrams are related by a finite sequence of Reidemeister moves. A.~Ishii \cite{I2} introduced moves for spatial trivalent graphs, called $SR$-moves, and showed that two spatial trivalent graphs are neighborhood equivalent if and only if their diagrams are related by a finite sequence of Reidemeister moves and $SR$-moves. Also, in \cite{I2} it was proved  that the cocycle invariant of a spatial trivalent graph is invariant under $SR$-moves. Hence, it is equivalent to defining the invariant for linked handlebodies. In fact, any spatial graph is neighborhood equivalent to some spatial trivalent graph. \newline

In the theory of algebraic systems there exist systems with a set of operations of one type. Let us give some examples. 

{\em Dimonoids} were introduced by J.-L.~Loday \cite{Loday2} in his construction of a universal enveloping algebra for the Leibniz algebra. Dimonoid is a set with two semigroup operations, which are connected by a set of axioms.  Dimonoids are examples of {\em duplexes}, which were introduced by T.~Pirashvili in \cite{Pir}. A duplex is an algebraic system with two associative binary operations (without added relations between these operations). T.~Pirashvili constructed a free duplex generated by a given set via planar trees and proved that the set of all permutations forms a free duplex on an explicitly described set of generators.

In \cite{Kor}, Koreshkov introduced {\em $n$-tuple semigroup} as an algebraic system $\mathcal{S} = (S, *_i, i \in I)$ such that $(S, *_i)$ is a semigroup for any $i \in I$, and such that the following relation holds:
\begin{equation} \label{semigroup_op}
(a *_i b) *_j c = a *_i (b *_j c),\quad a, b, c \in S,\quad i, j \in I. 
\end{equation}

A {\em brace} (skew brace) is a set with two group operations, which satisfy a certain axiom \cite{GV}, \cite{Rump}. A generalization of skew braces was suggested in the paper Bardakov-Neshchadim-Yadav~\cite{BNY}, where brace systems were introduced as a set with a family of group operations which are related by certain axioms.

As a generalization of these examples, in \cite{K} {\em homogeneous algebraic systems} were introduced, as algebraic systems in which all algebraic operations have the same arity and satisfy the same axioms. Particular case of such systems is a semigroup (monoid, group) system $\mathcal{G} = (G, *_i, i \in I)$, where $(G, *_i)$ is a semigroup (monoid, group) for any $i \in I$. An example of a semigroup system with two operations is a duplex. In \cite{K} $\mathcal{G}$ is said to be multi-semigroup (multi-monoid, multi-group) if the operations are related by the condition~(\ref{semigroup_op}).
An example of a multi-semigroup with $n$ operations is an $n$-tuple semigroup \cite{Kor} as described above.

In \cite{BF} quandle systems $\mathcal{Q} = (Q, *_i, i \in I)$ were considered, where $(Q, *_i)$ is a quandle for any $i \in I$, and a multiplication $*_i *_j$ of the operations $*_i$ and $*_j$ is defined by the rule
\begin{equation}
p (*_i *_j) q = (p *_i q) *_j q,\quad p, q \in Q.
\end{equation}

In the general case the algebraic system $(Q, *_i *_j)$ is not a quandle, but if the operations satisfy the axioms
\begin{equation} \label{quand}
(x *_i y) *_j z= (x *_j z) *_i (y *_j z),\quad (x *_j y) *_i z= (x *_i z) *_j (y *_i z),~\quad x, y, z \in Q,
\end{equation}
(that is, the operations are mutually distributive) then $(Q, *_i *_j)$ and  $(Q, *_j *_i)$ are quandles. V.~Turaev \cite{T} called quandle systems which satisfy axioms~(\ref{quand}) for all $i, j \in I$ {\em multi-quandles} and gave them a topological interpretation. \newline




\section{Extensions of quandles} \label{ext}

We begin with recalling some constructions of quandle extensions.

\subsection{$G$-family of quandles and $Q$-family of quandles} \label{sec:g_and_q}

In \cite{I} {\em $G$-family of quandles} was introduced which is an algebraic system whose axioms are motivated by handlebody-knot theory. This construction was further used to construct invariants of handlebody-knots. Let us briefly recall this construction.

\begin{definition} \label{GF}
	Let $G$ be a group. A $G$-family of quandles is a non-empty set $X$ with a family of binary operations $*_g\colon X \times  X \to X$, $g \in  G$, satisfying the following axioms:
\begin{enumerate}
\item  for any $x \in  X$ and any $g \in G$, $x *_g x = x$; 

\item for any $x, y \in X$ and any $g, h \in G$,
$$
x *_{gh} y = (x *_g y) *_h y,~~~x *_e y = x;
$$

\item for any $x, y, z \in X$ and any $g, h \in G$,
$$
(x *_{g} y) *_h z = (x *_h z) *_{h^{-1}gh} (y *_h z).
$$
\end{enumerate}
\end{definition}

In \cite{I} it was proved that for any $g \in G$ the set $X$ with the operation $*_g$ is a quandle $(X, *_g)$, and the direct product $X \times G$ with the operation
\begin{equation} \label{assoc_quandle_op}
(x, g) \cdot (y, h) = (x *_h y, h^{-1} g h)
\end{equation}
is a quandle which is called the {\em associated quandle of $X$}, and is denoted by $(X\times G, \cdot)$.
It is easy to see that the inverse operation $\bar{\cdot} \colon X\times G \to X\times G$ is the following:
\begin{equation} \label{inv_assoc_quandle_op}
(x, g) \bar{\cdot} (y, h) = (x *_{h^{-1}} y, h g h^{-1}).
\end{equation}

In this section we generalise the notion of $G$-family of quandles. Let $G$ be a group. A quandle $Q_G = (G, *)$ on the set $G$ may be defined in many ways. In particular, trivial quandle, $n$-conjugacy quandle,  Takasaki quandle, Alexander quandle, or generalized Alexander quandle are examples of quandles constructed on a given group $G$. Now let $X$ be a non-empty set, and for any $g \in G$ let us have a quandle $(X, *_g)$. A natural question arises:

\begin{question}
What quandles one can define on the direct product $X \times G$?
\end{question}

\begin{example}
1) Let $X = \{ x, y \}$ be a 2-element set, $G = \{ e, g \}$ be a 2-element group in which $g^2 = e$. On $X$ one can define only the structure of trivial quandle. Hence, $(X, *_e)$, $(X, *_g)$ are trivial quandles and we can define the associated quandle as the set $X \times G$ with the operation
$$
(u, a) \cdot (v, b) = (u *_b v, b^{-1} a b).
$$
In our case $(u, a) \cdot (v, b) = (u,  a)$. Hence, we have 4-element trivial quandle.

2) Let $X = \{ x, y, z \}$ be a 3-element set, $G = \{ e, g \}$ be the group from 1). On $X$ one can define three structures of non-isomorphic quandles. Suppose that $(X, *_e)= \T_3$ is the trivial quandle, $(X, *_g) = \R_3$ is the 3-element dihedral quandle. We can define the associated quandle as the set $X \times G$ with the operation
$$
(u, a) \cdot (v, b) = (u *_b v, b^{-1} a b).
$$
In our case $(u, a) \cdot (v, b) = (u *_b v,  a)$. In particular, if $b=e$, then $(u, a) \cdot (v, e) = (u,  a)$. Let us check that we really constructed a quandle on 6 elements. To do it, we can just show that $(X, *_g, g\in G)$ is indeed a $G$-family of quandles, so we need to check axioms from Definition~\ref{GF}. The first axiom is evident. To check the second we shall consider four cases of $*_{eg}, *_{ge}, *_{ee}$, and $*_{gg}$. The first two give us:
\begin{align*}
x *_{g e} y = &x *_g y = (x *_g y) *_e y, \\
x *_{e g} y = &x *_g y = (x *_e y) *_g y.
\end{align*}
The third option yields:
$$x *_{ee} y = x *_e y = x = (x *_e y) *_e y.$$
Finally, the fourth option, taking into account that $*_g$ is the dihedral quandle operation, and the fact that $gg=e$ by the group $G$ definition, yields:
$$x *_{gg} y = x *_e y = x = (x *_g y) *_g y.$$
Finally, to check the third axiom we calculate:
\begin{align*}
(x *_{g} y) *_e z = x*_g y = (x *_e z) *_{g} (y *_e z), \\
(x *_e y) *_g z = x *_g z = (x *_g z) *_e (y *_g z),
\end{align*}
and the cases of $(x *_e y) *_e z$ and $(x *_g y) *_g z$ are evident because both $*_e$ and $*_g$ are quandle operations.
\end{example}

The following construction generalises the definition of $G$-family of quandles.  Let  $G$ be a group with a quandle structure $Q_G = (G, *)$, and let $f\colon G \times G \to G$ be a map. 

\begin{definition}[\cite{BF}] \label{def:Gf-family}
	A $(G, *, f)$-{\it family of quandles} is a non-empty set $X$ with a family of binary operations $*_g\colon X \times  X \to X$, $g \in  G$, satisfying the following axioms:
\begin{enumerate}
\item  for any $x \in  X$ and any $g \in G$, $x *_g x = x$; 

\item for any $x, y \in X$ and any $g, h \in G$,
$$
x *_{gh} y = (x *_g y) *_h y,~~~x *_e y = x,
$$
where $gh$ is the product in $G$ and $e$ is the unit element of $G$;

\item for any $x, y, z \in X$ and any $g, h, q \in G$,
$$
 (x*_{f(g, h)} y) *_{f(g*h, q)} z = (x*_{f(g, q)} z)  *_{f(g*q, h * q)}  (y *_{f(h, q)} z).
$$ 
\end{enumerate}
\end{definition}

Note that unlike the case of $G$-families of quandles here for a given $g \in G$ $(X,*_g)$ is not necessarily a quandle. Nevertheless, the following statement holds:

\begin{proposition}
	Consider a $(G,*,f)$-family of quandles and an arbitrary $g\in G$. Then $*_g$ is idempotent and right-invertible.
\end{proposition}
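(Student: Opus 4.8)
The plan is to extract the two desired properties from the three axioms of a $(G,*,f)$-family of quandles by making judicious substitutions. Idempotency is immediate: axiom (1) states $x *_g x = x$ for all $x \in X$ and all $g \in G$, which is precisely the assertion that $*_g$ is idempotent. So the only real content is right-invertibility, i.e.\ that for each fixed $g \in G$ the map $R_g^y \colon X \to X$, $x \mapsto x *_g y$, is a bijection (a right translation by $y$), for every $y \in X$.

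For right-invertibility I would exploit axiom (2), which relates the operations indexed by different group elements via the group multiplication. The key observation is that $x *_{gg^{-1}} y = x *_e y = x$, while by the first part of axiom (2) we also have $x *_{gg^{-1}} y = (x *_g y) *_{g^{-1}} y$ and, writing the product the other way, $x *_{g^{-1}g} y = (x *_{g^{-1}} y) *_g y$. Hence the maps $R_g^y$ and $R_{g^{-1}}^y$ are mutually inverse: composing them in either order gives the identity on $X$. This shows $R_g^y$ is a bijection with explicit inverse $R_{g^{-1}}^y$, which is exactly the statement that $*_g$ is right-invertible, with the inverse operation given by $*_{g^{-1}}$.

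The main subtlety — really the only thing to be careful about — is the direction of composition and making sure axiom (2) is applied with the correct pairing of group elements: one must check that $(x *_g y) *_{g^{-1}} y = x *_{gg^{-1}} y$ and $(x *_{g^{-1}} y) *_g y = x *_{g^{-1}g} y$ both collapse to $x *_e y = x$ using $x *_e y = x$ from axiom (2). Axiom (3), the distributivity-type condition involving $f$, plays no role here and can be ignored entirely; this is worth remarking on, since it parallels the situation for ordinary $G$-families where quandle-ness of each $*_g$ already follows from axioms (1) and (2) alone, and only the full distributivity (axiom (3)) fails to carry over without the extra hypotheses on $f$. I would close by noting that, just as in the $G$-family case, $*_g$ need not be left-distributive, so $(X, *_g)$ is in general not a quandle — only an idempotent, right-invertible magma (a ``rack-like'' structure missing self-distributivity).
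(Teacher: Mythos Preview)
Your proof is correct and follows exactly the paper's own approach: idempotency from axiom~(1), right-invertibility from axiom~(2) with inverse operation $*_{g^{-1}}$. You have simply written out in full the details that the paper's proof leaves implicit.
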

\begin{proof}
	Idempotency follows directly from the first axiom of $(G,*,f)$-families of quandles. Right invertibility follows from the second axiom, with the inverse operation being $*_{g^{-1}}$ where $g^{-1}$ is the inverse of $g$ in the group $G$.
\end{proof}

A proof of the following lemma can be find in \cite[Lemma 2.4]{BF}.

\begin{lemma}[\cite{BF}] \label{lem:for}
 For any $x, y \in X$ and any $g, h, q \in G$, the function $f$ satisfies the condition
$$
x*_{f(g, h)f(g*h,q)} y = x*_{f(g, q) f(g*q,h*q)} y.
$$ 
\end{lemma}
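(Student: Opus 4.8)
The plan is to specialise the third axiom of a $(G,*,f)$-family of quandles by setting $z=y$, and then collapse both sides using the idempotency and the "length-two product" rules from the first two axioms.

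First I would write down the third axiom with $x,y,z$ replaced by $x,y,y$:
$$
(x*_{f(g, h)} y) *_{f(g*h, q)} y = (x*_{f(g, q)} y)  *_{f(g*q, h * q)}  (y *_{f(h, q)} y),\qquad g,h,q\in G.
$$
On the right-hand side the innermost term is $y *_{f(h,q)} y$, which equals $y$ by the first axiom (idempotency of every $*_g$, here with $g=f(h,q)$). So the identity becomes
$$
(x*_{f(g, h)} y) *_{f(g*h, q)} y = (x*_{f(g, q)} y)  *_{f(g*q, h * q)}  y.
$$

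Next I would apply the second axiom, in the form $(a *_{s} y) *_{t} y = a *_{st} y$ (valid for all $a\in X$ and all $s,t\in G$), to each side separately: the left-hand side becomes $x *_{f(g,h)\,f(g*h,q)} y$ and the right-hand side becomes $x *_{f(g,q)\,f(g*q,h*q)} y$. Comparing the two yields exactly
$$
x*_{f(g, h)f(g*h,q)} y = x*_{f(g, q) f(g*q,h*q)} y,
$$
as claimed.

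This argument is essentially a direct substitution, so I do not expect a genuine obstacle; the only point requiring a moment's care is to confirm that the second axiom is being used in the correct direction (combining two successive operations by $y$ into a single operation indexed by the product in $G$), and that it applies for arbitrary group elements $s,t$, not merely those of the form $g,h$ — which it does, since the axiom is quantified over all $g,h\in G$ and $f(g,h),f(g*h,q)$ etc.\ are themselves elements of $G$. Should one wish, the same computation read with the roles of $h$ and $q$ interchanged gives the companion symmetry, but that is not needed here.
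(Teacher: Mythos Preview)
Your proof is correct. The paper does not actually include a proof of this lemma but merely cites \cite[Lemma~2.4]{BF}; your argument---specialising axiom~(3) at $z=y$, collapsing $y*_{f(h,q)}y$ to $y$ via axiom~(1), and then merging the two successive actions by $y$ into one via axiom~(2)---is the natural one and is almost certainly what appears in the cited source.
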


\begin{example}
\label{ex:f-quandles}
1) If $Q_G = Conj(G)$ is the conjugacy quandle, that is a quandle with the operation $g * h = h^{-1} g h$,  $f(g, h) = h$ for all $g, h \in G$, then by Lemma~\ref{lem:for} we have
$$
x*_{hq} y = x*_{q q^{-1}  h q} y  \Leftrightarrow x*_{hq} y = x*_{ h q} y,
$$ 
and the $(G, *, f)$-family of quandles is the $G$-family of quandles.

2) If $Q_G = T(G)$ is the trivial quandle, that is a quandle with the operation $g * h = g$,  $f(g, h) = h$ for all $g, h \in G$, then by Lemma~\ref{lem:for} we have
$$
x*_{hq} y = x*_{q h} y.
$$ 
Hence, in this case the operations $*_g$ commute in the sense that $(x*_g y)*_h y = (x*_h y)*_g y$ for all $x,y \in X, \, g,h \in G$. For example, that occurs when $G$ is abelian.

3) If $Q_G = Tak(G)$ is the Takasaki quandle, that is a quandle with the operation $g * h = h g^{-1} h$,  $f(g, h) = h$ for all $g, h \in G$, then by Lemma~\ref{lem:for} we have
$$
x*_{hq} y = x*_{q q h^{-1}q} y.
$$ 
It is true, for example, if $G$ has exponent 2.
\end{example}

Suppose that we have a $(G, *, f)$-family of quandles. Define an operation $\cdot$ on the set $X \times G$ by the rule
\begin{equation} \label{gf_family_op}
(x, g) \cdot (y, h) = (x*_{f(g, h)} y, g*h),\quad x, y \in X,  g, h \in G.
\end{equation}

In the paper \cite{BF} the following proposition was announced. Here we give its full proof.

\begin{proposition} \label{prop:adj_f_quandle}
For a $(G, *, f)$-family of quandles the set $X \times G$ with the operation (\ref{gf_family_op}) is a quandle.
\end{proposition}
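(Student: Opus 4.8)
The plan is to verify the three quandle axioms for $(X\times G,\cdot)$ directly, in each case splitting the identity into its two coordinates: the second coordinate always reduces to a quandle identity in $Q_G=(G,*)$, while the first coordinate reduces to one of the three axioms of a $(G,*,f)$-family of quandles. So the real content is organisational — matching each needed identity to the axiom it comes from.

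First, idempotency: $(x,g)\cdot(x,g)=(x*_{f(g,g)}x,\,g*g)$, whose second coordinate is $g$ because $Q_G$ is a quandle and whose first coordinate is $x$ by the first axiom of a $(G,*,f)$-family. Next, right self-distributivity: expanding
\[
\bigl((x,g)\cdot(y,h)\bigr)\cdot(z,q)=\bigl((x*_{f(g,h)}y)*_{f(g*h,q)}z,\ (g*h)*q\bigr)
\]
and
\[
\bigl((x,g)\cdot(z,q)\bigr)\cdot\bigl((y,h)\cdot(z,q)\bigr)=\bigl((x*_{f(g,q)}z)*_{f(g*q,h*q)}(y*_{f(h,q)}z),\ (g*q)*(h*q)\bigr),
\]
the second coordinates agree by right self-distributivity in $Q_G$, and the first coordinates agree by exactly the third axiom of a $(G,*,f)$-family — indeed that axiom appears to have been reverse-engineered precisely from this requirement.

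The step demanding the most care is right invertibility, i.e. that $(x,g)\mapsto(x,g)\cdot(y,h)$ is a bijection of $X\times G$ for every fixed $(y,h)$. I would produce an explicit inverse operation
\[
(u,a)\,\bar{\cdot}\,(y,h)=\bigl(u*_{f(a\,\bar{*}\,h,\,h)^{-1}}y,\ a\,\bar{*}\,h\bigr),
\]
where $\bar{*}$ is the inverse operation of the quandle $Q_G$ and $k^{-1}$ denotes the inverse of $k$ in the group $G$ (recall from the preceding proposition that $*_k$ is right invertible with inverse $*_{k^{-1}}$). To check $\bigl((x,g)\cdot(y,h)\bigr)\bar{\cdot}(y,h)=(x,g)$ one sets $a=g*h$, uses $a\,\bar{*}\,h=g$ so that $f(a\,\bar{*}\,h,h)=f(g,h)$, and collapses the first coordinate via the second axiom: $(x*_{f(g,h)}y)*_{f(g,h)^{-1}}y=x*_{f(g,h)f(g,h)^{-1}}y=x*_e y=x$. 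The opposite composition $\bigl((x,g)\bar{\cdot}(y,h)\bigr)\cdot(y,h)=(x,g)$ is verified symmetrically, again collapsing the first coordinate through axiom~(2) and the second through $(g\,\bar{*}\,h)*h=g$. Once these verifications are complete, $(X\times G,\cdot)$ satisfies all the quandle axioms. I anticipate no genuine conceptual obstacle; the one thing to keep straight is that two different ``inverses'' occur inside the formula for $\bar{\cdot}$ — the quandle inverse $\bar{*}$ of $Q_G$ and the group inverse $(-)^{-1}$ of $G$ — and conflating the two would break the argument.
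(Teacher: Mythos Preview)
Your proposal is correct and matches the paper's proof essentially line for line: both split each quandle axiom into its $X$- and $G$-coordinates, reducing the latter to the quandle axioms of $Q_G$ and the former to axioms (1)--(3) of a $(G,*,f)$-family. The only cosmetic difference is that for right invertibility the paper argues abstract existence/uniqueness (solve $q=g*h$ for $g$ in $Q_G$, then $z=x*_{f(g,h)}y$ for $x$ via right-invertibility of $*_{f(g,h)}$), whereas you write down the explicit inverse $(u,a)\,\bar{\cdot}\,(y,h)=(u*_{f(a\,\bar*\,h,\,h)^{-1}}y,\ a\,\bar*\,h)$ and verify it; this is the same argument made constructive.
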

\begin{proof}
We need to check the quandle axioms.

1) Let $(x, g) \in X \times G$, then $(x, g) \cdot (x, g) = (x *_{f(g, g)} x, g*g)$. Since $*_f(g,g)$ is idempotent and $(G, *)$ is a quandle, we have $(x,g)\cdot (x,g)=(x,g)$, and the first axiom is satisfied.

2) Suppose that $(y, h)$ and $(z, q)$ are two elements of $X \times G$. We need to prove that there is a unique $(x, g) \in X \times G$ such that
$$
(x, g) \cdot (y, h) = (z, q).
$$
It means that
$$
z = x*_{f(g, h)} y, \quad q = g*h,
$$
and $x$ and $g$ must be determined uniquely from these equalities. Since $*_{f(g, h)}$ is right-invertible and $(G, *)$ is a quandle, the second axiom is satisfied.

3) Suppose that $(x, g)$, $(y, h)$ and $(z, q)$ are three  elements of $X \times G$. Then
$$
\left( (x, g) \cdot (y, h) \right) \cdot (z, q) = (x*_{f(g, h)} y, g*h) \cdot (z, q) = \left(  (x*_{f(g, h)} y) *_{f(g*h, q)} z, (g*h) *q \right).
$$
On the other hand,
\begin{multline*}
((x, g) \cdot (z, q)) \cdot ((y, h) \cdot (z, q) )= (x*_{f(g, q)} z, g*q) \cdot (y *_{f(h, q)} z, h * q) = \\
=  \left(  (x*_{f(g, q)} z)  *_{f(g*q, h * q)}  (y *_{f(h, q)} z), (g*q) *(h * q) \right).
\end{multline*}
Since $(G, *)$ is a quandle, to get self-distributivity equality
$$
\left( (x, g) \cdot (y, h) \right) \cdot (z, q) = ((x, g) \cdot (z, q)) \cdot ((y, h) \cdot (z, q) ),
$$
we need to have
$$
 (x*_{f(g, h)} y) *_{f(g*h, q)} z = (x*_{f(g, q)} z)  *_{f(g*q, h * q)}  (y *_{f(h, q)} z).
$$
This equality follows from the third axiom of $(G, *, f)$-family of quandles.
\end{proof}

We will denote the quandle constructed in the proposition by $(X\times G, Q_G, f, \cdot)$ and call it the {\em associated quandle} just as in the case of $G$-families of quandles. It is easy to see that the projection
$$
(X\times G, Q_G, f, \cdot) \to Q_G, \quad (x, g) \mapsto g,
$$
is a quandle homomorphism. 


\begin{remark}
In the definition of $(G, *, f)$-family of quandles we can take a binary operation $\circ = \circ_f \colon G \times G \to G$ instead of the map $f\colon G \times G \to G$.
In this case the axiom (3) in Definition~\ref{def:Gf-family} takes the form

(3')  for any $x, y, z \in X$ and any $g, h, q \in G$,
$$
 (x*_{g \circ  h} y) *_{(g*h) \circ  q} z = (x*_{g \circ  q} z)  *_{(g*q) \circ  (h * q)}  (y *_{h \circ  q} z).
$$ 
Lemma~\ref{lem:for} can then be formulate in the following form.
 For any $x, y \in X$ and any $g, h, q \in G$, the operation $\circ$ satisfies the condition
$$
x*_{(g \circ h)((g*h) \circ q)} y = x*_{(g \circ q) ((g*q) \circ (h*q))} y.
$$ 
Hence, on a group $G$ we have a quandle operation $*$ and some other operation $\circ$.
\end{remark}


\subsection{$Q$-family of quandles and its generalization} \label{sec:q_fam}

A parallel theory may be constructed if instead of working with a group $G$ we work with an arbitrary quandle $Q$. The following definition can be found in
 \cite{I}. 

\begin{definition}[\cite{I}]
	Let $(Q, \circ)$ be a quandle. A $Q$-family of quandles is a non-empty set $X$ with a family of binary operations $*_a\colon X \times  X \to X, a \in  Q$,  satisfying the following axioms.
\begin{enumerate}
\item  for any $x \in  X$ and any $a \in Q$, $x *_a x = x$; 

\item for any $x \in X$ and any $a \in Q$, the map $S_{x, a}\colon X \to X$ defined by $S_{x,a}(y) = y*_a x$
is a bijection;

\item for any $x, y, z \in X$ and any $a, b \in Q$,
$$
(x *_{a} y) *_b z = (x *_b z) *_{a \circ b} (y *_b z).
$$
\end{enumerate}
\end{definition}

Let $(Q, \circ)$ be a quandle, $(X, \{*_a \}_{a\in Q})$ be a $Q$-family of quandles. It can be routinely checked that binary operation 
$$
\cdot \colon (X \times Q) \times (X \times Q) \to X \times Q
$$
defined by the rule
$$
(x, a) \cdot (y, b) = (x *_b y, a \circ b)
$$
gives a quandle structure on the set $X \times Q$. This quandle, just like before, is called the {\em associated quandle} of the family.

We construct a generalization of $Q$-family of quandles. To that end we recall a general construction of quandles which was suggested in~\cite{BarSin}.

\begin{proposition}[\cite{BarSin}] \label{prop:gen_quandle}
Let $X$ and $S$ be two sets, $g\colon X \times X \to \Maps(S \times S, S)$ and $f\colon S \times S \to \Maps(X \times X, X)$ be two maps. Then the set $X \times S$ with the binary operation 
\begin{equation}\label{genralised-quandle-operation}
(x, s)* (y,t)= \big( f_{s, t}(x, y), ~g_{x, y}(s, t) \big)
\end{equation}
forms a quandle if and only if the following conditions hold:
\begin{enumerate}
\item $f_{s, s}(x, x)=x$ and $g_{x, x}(s, s)=s$ for all $x \in X$, $s \in S$;
\item  for each $(y, t) \in X \times S$, the map $(x, s) \mapsto  \big( f_{s, t}(x, y),~g_{x, y}(s, t) \big)$ is a bijection;
\item $f_{g_{x, y}(s, t), u}\Big(f_{s, t}(x, y),~ z \Big)= f_{g_{x, z}(s, u), g_{y, z}(t, u)}\Big(f_{s, u}(x, z), ~f_{t, u}(y, z) \Big)$ 
\item[] and
\item[] $g_{f_{s, t}(x, y), z}\Big(g_{x, y}(s, t), ~u \Big)= g_{f_{s, u}(x, z), f_{t, u}(y, z)}\Big(g_{x, z}(s, u), ~g_{y, z}(t, u) \Big)$.
\end{enumerate}
\end{proposition}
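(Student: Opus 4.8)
The plan is to verify the three quandle axioms for the operation \eqref{genralised-quandle-operation} one at a time and observe that each one translates, componentwise, into precisely the correspondingly numbered condition; this makes the equivalence an \emph{if and only if} with nothing to choose. For idempotency, \eqref{genralised-quandle-operation} gives directly $(x,s)*(x,s)=(f_{s,s}(x,x),\, g_{x,x}(s,s))$, so $(x,s)*(x,s)=(x,s)$ for all $(x,s)$ if and only if $f_{s,s}(x,x)=x$ and $g_{x,x}(s,s)=s$ for all $x\in X$, $s\in S$, which is condition (1). For right invertibility, the right translation by a fixed $(y,t)$ is \emph{literally} the map $(x,s)\mapsto(f_{s,t}(x,y),\, g_{x,y}(s,t))$, so the quandle axiom demanding that every right translation be a bijection of $X\times S$ is verbatim condition (2).

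The remaining point is right self-distributivity. I would introduce auxiliary names to keep the nested subscripts from tangling: writing $x'=f_{s,t}(x,y)$, $s'=g_{x,y}(s,t)$, so that $(x,s)*(y,t)=(x',s')$, one gets
\[
\big((x,s)*(y,t)\big)*(z,u)=\big(f_{s',u}(x',z),\, g_{x',z}(s',u)\big)
=\Big(f_{g_{x,y}(s,t),\,u}\big(f_{s,t}(x,y),z\big),\ g_{f_{s,t}(x,y),\,z}\big(g_{x,y}(s,t),u\big)\Big).
\]
Similarly, with $a=f_{s,u}(x,z)$, $p=g_{x,z}(s,u)$, $b=f_{t,u}(y,z)$, $q=g_{y,z}(t,u)$, so that $(x,s)*(z,u)=(a,p)$ and $(y,t)*(z,u)=(b,q)$, the right-hand side $(a,p)*(b,q)$ equals
\[
\Big(f_{g_{x,z}(s,u),\,g_{y,z}(t,u)}\big(f_{s,u}(x,z),\, f_{t,u}(y,z)\big),\ g_{f_{s,u}(x,z),\,f_{t,u}(y,z)}\big(g_{x,z}(s,u),\, g_{y,z}(t,u)\big)\Big).
\]
Comparing first coordinates of these two expressions gives the first equation of condition (3), and comparing second coordinates gives the second; since the ambient set is the product $X\times S$, the self-distributivity identity holds for all triples of elements if and only if both coordinate identities hold for all $x,y,z\in X$ and all $s,t,u\in S$.

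Assembling the three equivalences proves the proposition. I do not expect any genuine obstacle: the argument is a mechanical unpacking of the quandle axioms along the two projections. The only place that needs care is the self-distributivity computation, where the subscripts of $f$ and $g$ are themselves values of $g$ and $f$; the auxiliary substitutions $x',s',a,p,b,q$ above are exactly what keeps the expansion legible.
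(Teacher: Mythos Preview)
Your proof is correct: the argument is the straightforward componentwise translation of the three quandle axioms into conditions (1)--(3), and each step is sound. Note, however, that the paper does not actually supply a proof of this proposition; it is quoted from \cite{BarSin} as a known result, so there is no in-paper argument to compare against. Your direct verification is exactly the standard way to establish such a statement.
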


Suppose that $S$ is a quandle $(Q, \circ)$ and $g\colon X \times X \to \Maps(Q \times Q, Q)$ is defined by the rule $g_{x, y}(s, t) = s \circ t$. In this case we get a construction of a quandle.

\begin{proposition}[\cite{A}] \label{prop:spec_quandle}
Let $X$ be a set, $(Q, \circ)$ be a quandle,  and  $f\colon Q \times Q \to \Maps(X \times X, X)$ be a map. Then the set $X \times Q$ with the binary operation 
\begin{equation}\label{genralised-quandle-operation-1}
(x, s)\cdot (y,t)= \big( f_{s, t}(x, y), ~s \circ t \big)
\end{equation}
forms a quandle if and only if the following conditions hold:
\begin{enumerate}
\item $f_{s, s}(x, x)=x$  for all $x \in X$, $s \in Q$;
\item  for each $(y, t) \in X \times Q$, the map $(x, s) \mapsto  \big( f_{s, t}(x, y),~s \circ t \big)$ is a bijection;
\item $f_{s \circ t, u}\Big(f_{s, t}(x, y),~ z \Big)= f_{s \circ u, t \circ u}\Big(f_{s, u}(x, z), ~f_{t, u}(y, z) \Big)$. 
\end{enumerate}
\end{proposition}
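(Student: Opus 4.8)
The plan is to obtain this statement as a direct specialization of Proposition~\ref{prop:gen_quandle}. Put $S=(Q,\circ)$ and take $g\colon X\times X\to\Maps(Q\times Q,Q)$ to be the constant map $g_{x,y}(s,t)=s\circ t$ for all $x,y\in X$, $s,t\in Q$ (this is well defined since $\circ\colon Q\times Q\to Q$). With this choice the operation~(\ref{genralised-quandle-operation}) becomes exactly~(\ref{genralised-quandle-operation-1}), so it suffices to show that, \emph{given} that $(Q,\circ)$ is a quandle, the three conditions of Proposition~\ref{prop:gen_quandle} reduce to conditions (1)--(3) of the present statement.

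First I would dispose of condition (1): since $\circ$ is idempotent, $g_{x,x}(s,s)=s\circ s=s$ holds automatically, so ``$f_{s,s}(x,x)=x$ and $g_{x,x}(s,s)=s$'' collapses to ``$f_{s,s}(x,x)=x$'', which is condition (1) here. Condition (2) needs no work at all: substituting $g_{x,y}(s,t)=s\circ t$ into the bijectivity requirement of Proposition~\ref{prop:gen_quandle} gives verbatim the bijectivity requirement stated here.

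The only point that requires a genuine (if short) argument is condition (3), which in Proposition~\ref{prop:gen_quandle} is a pair of equalities. Substituting $g_{x,y}(s,t)=s\circ t$ into the first equality yields precisely
$$
f_{s \circ t,\, u}\Big(f_{s, t}(x, y),~ z \Big)= f_{s \circ u,\, t \circ u}\Big(f_{s, u}(x, z), ~f_{t, u}(y, z) \Big),
$$
which is condition (3) here. Substituting into the second equality, all dependence on the $X$-variables drops out and it degenerates to $(s\circ t)\circ u=(s\circ u)\circ(t\circ u)$, i.e. the right self-distributivity of $\circ$; this holds because $(Q,\circ)$ is a quandle, so the second equality is automatically satisfied and may be discarded.

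Putting these three reductions together, Proposition~\ref{prop:gen_quandle} says that $X\times Q$ with operation~(\ref{genralised-quandle-operation-1}) is a quandle if and only if conditions (1)--(3) above hold, which is the claim. The main obstacle is nothing deep: it is the bookkeeping of checking that every ``$g$-side'' requirement in Proposition~\ref{prop:gen_quandle} degenerates to an axiom of $(Q,\circ)$ (idempotency for condition 1, self-distributivity for condition 3) rather than to a new constraint on $f$, and that the remaining ``$f$-side'' requirements match conditions (1)--(3) exactly. (Alternatively, one could prove the proposition from scratch by verifying the three quandle axioms for~(\ref{genralised-quandle-operation-1}), in the same style as the proof of Proposition~\ref{prop:adj_f_quandle}, but the specialization route above is shorter.)
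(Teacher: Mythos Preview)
Your proof is correct and follows exactly the approach the paper takes: the text immediately preceding the proposition sets $S=(Q,\circ)$ and $g_{x,y}(s,t)=s\circ t$, indicating that Proposition~\ref{prop:spec_quandle} is obtained as the specialization of Proposition~\ref{prop:gen_quandle} that you carry out in detail. The paper does not spell out the reductions (it cites \cite{A}), so your write-up is in fact more explicit than what appears there.
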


We will call the quandle constructed in this proposition the {\em $(Q, \circ, f)$-family of quandles}. Let us show that it generalizes the $Q$-family of quandles. Indeed, suppose that the map $f\colon Q \times Q \to \Maps(X \times X, X)$ depends only on the second argument, in other words 
$$
f_{s,t}(x, y) = f_t(x, y),~~~ s\in Q.
$$
If we put $f_{s,t}(x, y) =: x*_t y$, then it follows from Proposition~\ref{prop:spec_quandle} that:

\begin{corollary}
Let $X$ be a set, 
$(Q, \circ)$ be a quandle,  and  $f\colon Q  \to \Maps(X \times X, X)$ a map, $f_{s,t}(x, y) = x*_t y$. Then the set $X \times Q$ with the binary operation 
\begin{equation}\label{cor}
(x, s)\cdot (y,t)= \big( x *_t y, ~s \circ t \big)
\end{equation}
forms a quandle if and only if the following conditions hold:
\begin{enumerate}
\item $x *_t x=x$  for all $x \in X$, $t \in Q$;
\item  for each $(y, t) \in X \times Q$, the map $(x, s) \mapsto  \big( x*_t y,~s \circ t \big)$ is a bijection;
\item $(x*_t y) *_u z =  (x *_u z) *_{t \circ u} (y *_u z)$. 
\end{enumerate}
\end{corollary}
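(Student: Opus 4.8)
The plan is to obtain this as an immediate specialisation of Proposition~\ref{prop:spec_quandle}. The hypothesis that $f$ is a map $Q \to \Maps(X\times X, X)$ (rather than $Q\times Q \to \Maps(X\times X, X)$) means precisely that the map $f_{s,t}$ of Proposition~\ref{prop:spec_quandle} does not depend on its first subscript; writing $f_{s,t}(x,y) = x *_t y$, I would simply substitute this into the data and the three conditions of that proposition and read off the result.

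First, substituting into (\ref{genralised-quandle-operation-1}) turns the operation $(x,s)\cdot(y,t) = (f_{s,t}(x,y), s\circ t)$ into $(x,s)\cdot(y,t) = (x *_t y, s\circ t)$, which is exactly the operation (\ref{cor}). Hence it remains only to rewrite conditions (1)--(3). Condition (1) of Proposition~\ref{prop:spec_quandle}, namely $f_{s,s}(x,x)=x$, becomes $x *_s x = x$ for all $x\in X$ and $s\in Q$, i.e.\ condition (1) of the corollary after renaming $s$ to $t$. Condition (2) is already phrased in terms of the map $(x,s)\mapsto(f_{s,t}(x,y),\, s\circ t) = (x *_t y,\, s\circ t)$, so it carries over verbatim. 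For condition (3), the left-hand side $f_{s\circ t, u}\big(f_{s,t}(x,y),\, z\big)$ loses its first subscripts and becomes $(x *_t y)*_u z$, while the right-hand side $f_{s\circ u, t\circ u}\big(f_{s,u}(x,z),\, f_{t,u}(y,z)\big)$ becomes $(x *_u z)*_{t\circ u}(y *_u z)$; equating the two yields condition (3) of the corollary. Conversely, the same substitution shows each of the three conditions in the corollary implies the corresponding condition in Proposition~\ref{prop:spec_quandle}, so the equivalence is exact and the conclusion follows from that proposition.

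Since the argument is purely a matter of substitution and relabelling, no genuine obstacle arises. The only point that warrants a moment's care is the bookkeeping in condition (3): one must verify that it is the \emph{first} subscript slot of $f$ that is suppressed by the hypothesis, so that in $f_{s\circ t,\, u}$ the surviving subscript is $u$ and in $f_{s\circ u,\, t\circ u}$ it is $t\circ u$ — which is exactly what produces the self-distributivity law $(x *_t y)*_u z = (x *_u z)*_{t\circ u}(y *_u z)$ with the quandle operation $\circ$ of $Q$ governing the subscripts.
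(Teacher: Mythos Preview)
Your proposal is correct and takes essentially the same approach as the paper: the corollary is obtained there precisely by assuming that the map $f$ of Proposition~\ref{prop:spec_quandle} depends only on its second argument, writing $f_{s,t}(x,y)=x*_t y$, and reading off the three conditions. Your substitution and bookkeeping in condition~(3) match the paper's intent exactly.
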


\begin{remark}
We see that the second axioms in the definitions of $G$-family of quandles and $Q$-family of quandles are different. The $G$-family is stronger: it implies that the mapping $S_{x,a}(y)=y*_a x$ is a bijection, but it also states what is the inverse operation: it is given by $S_{x,a^{-1}}$. In particular, $S_{x,e}$ is the trivial operation. Hence, the trivial element of the group $e\in G$ corresponds to the trivial quandle $(X,*_e)$.

For example, if the group $G$ consists of one element, the $G$-family of quandles also consists of one element, which has to be the trivial quandle. In case of $Q$-families this does not hold: for a 1-element quandle $Q$ the corresponding $Q$-family may consist of {\em one arbitrary quandle}. 

Later we will see (Section~\ref{sec:handlebody}) that the $G$-family axiom is more important in handlebody-knot theory.
\end{remark}

\begin{question}
Is it possible to give a general construction which includes $(G, *, f)$-family of quandles and $(Q, \circ, f)$-family of quandles?
\end{question}

\bigskip


\section{$(G,*,f)$-families of quandles and handlebody-links} \label{sec:ab}
\label{sec:handlebody}

\subsection{Handlebody-links}
We recall the definition of {\em handlebody-links} following \cite{I2}.

\begin{definition}
\label{def:handlebody_link}
	A {\em handlebody-link} is a disjoint union of handlebodies embedded in $\mathbb{R}^3$.  A {\em handlebody-knot} is a one
component handlebody-link. Two handlebody-links  are {\em equivalent} if there is an orientation-preserving self-homeomorphism of $\mathbb{R}^3$  which sends one to the other. 
\end{definition}

 A {\em spatial graph} is a finite graph embedded in  $\mathbb{R}^3$. Two
spatial graphs are {\em equivalent} if there is an orientation-preserving self-homeomorphism of  $\mathbb{R}^3$ which sends one
to the other.

Each handlebody-link may be considered as a regular neighbourhood of a spatial trivalent graph (we allow the graph to have circular components without vertices). This graph is said to {\em represent} the handlebody-link. A {\em diagram} of a handlebody link is the diagram of a spatial graph representing it, that is a general position projection of the graph onto a plane such that at every double point of the projection the structure of over- and undercrossing is supplied. Naturally, different diagrams may correspond to the same handlebody-link. It is known (see \cite{I2}), that two diagrams represent equivalent handlebody-links if and only if those diagrams may be connected by a sequence of moves depicted in Fig.~\ref{fig:reid_handle} (to be precise, the moves $R_1, R_2, TR_1$, and $TR_2$ should be generalised in the following manner: for each move $M\in \{R_1,R_2,TR_1,TR_2\}$ also consider the move $M'$ obtained by changing all undercrossings to overcrossings and vice-versa).

\begin{figure}
\centering\includegraphics[width=300pt]{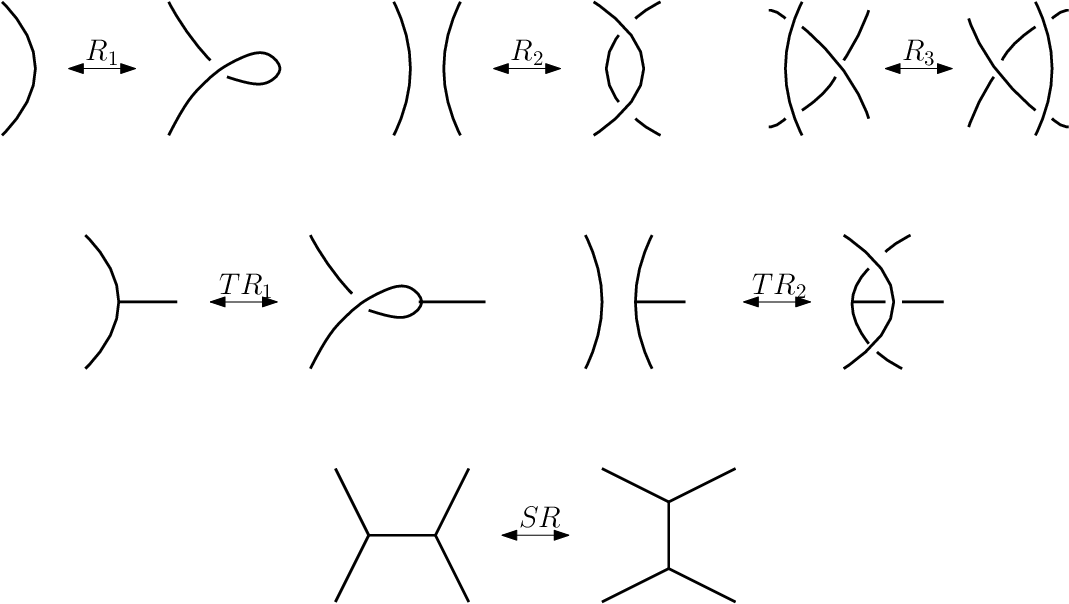}
\caption{Moves on handlebody-link diagrams}\label{fig:reid_handle}
\end{figure}

The names of the moves are chosen as follows: the moves $R_1-R_3$ are the usual Reidemeister moves; moves $TR_1, TR_2$ are the first and the second  Reidemeister moves with an additional arc producing a trivalent vertex; finally, the $SR$ move is analogous to ``saddle move'' or ``handle move'' known in many topological theories. In literature the last move is also known as {\em $HI$-move}, because the shape of the transforming graphs resemble letters H and I.

\begin{remark}
We shall note that the theory of handlebody-links is different from the theory of  spatial graphs. It was remarked by S.\,V.~Matveev and A.\,T.~Fomenko (see Figures 306-307 in the book~\cite{MF}) and illustrated with the following example. The ``topological man'' with linked fingers (see Figure~\ref{fig:topol_man}a, top) can unlink his fingers in the category of handlenody-links (see Figure~\ref{fig:finger_unlink}), but he cannot unlink fingers in the category of spatial graphs, see Section~\ref{sec:graphs} below.
\end{remark}

\begin{figure}
\centering\includegraphics[width=250pt]{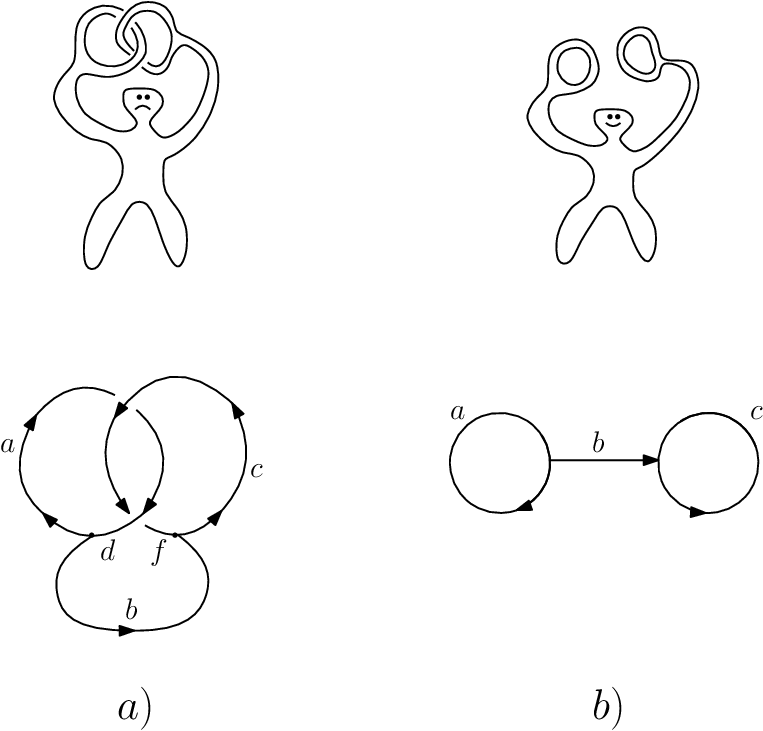}
\caption{``Topological man'' with linked (a) and unlinked (b) fingers, and its graph representation}\label{fig:topol_man}
\end{figure}

\begin{figure} 
\centering\includegraphics[width=250pt]{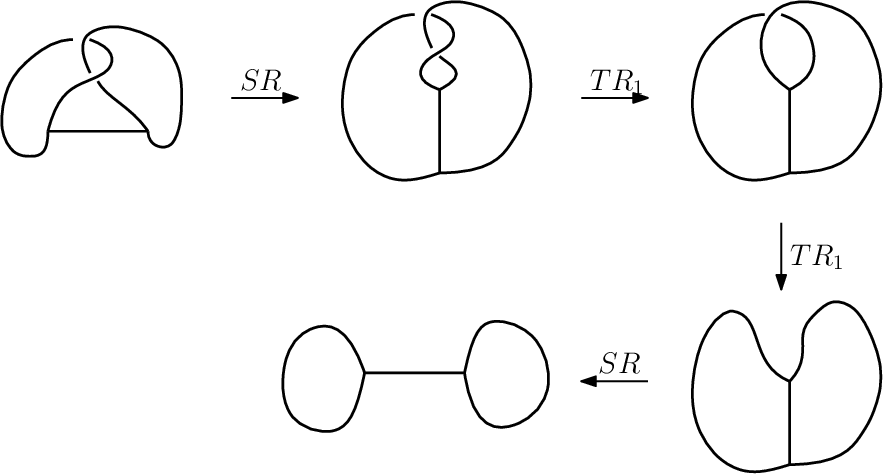}
\caption{How to unlink the fingers of the topological man in the category of handlebody-links. First figure depicts the graph-representation of the man with linked fingers, while the last is the representation of the man with unlinked fingers}\label{fig:finger_unlink}
\end{figure}

Given a $(G,*,f)$-family of quandles we define a {\em proper colouring} of a handlebody-link diagram by the associated quandle $X\times G$ in the following manner. First, let us orient the edges of the trivalent graph arbitrarily. Hence we obtain an oriented diagram of the handlebody-link. A {\em colouring} of the diagram $D$ by the associated quandle $X\times G$ is a map $c\colon\mathcal{A}(D)\to X\times G$, where $\mathcal{A}(D)$ stands for the set of {\em arcs} of the diagram, that is curves whose ends are either undercrossings or trivalent vertices. Each colour here is a pair $f=(x,g)\in X\times G$. We shall call $x$ the {\em $X$-element} of $f$ and $g$ its {\em $G$-element}.

\begin{definition}
	A colouring $c$ of a diagram $D$ is called {\em proper} if it satisfies the following conditions:
	
	\begin{enumerate}
	\item at each crossing the {\em quandle colouring condition} is satisfied as shown in Fig.~\ref{fig:color_rules}$a)$, where $(x,g),(y,h) \in X\times G$, and $\cdot$ is the associated quandle operation defined by the formula~(\ref{gf_family_op});
	\item for each trivalent vertex the colouring rule is shown in Fig.~\ref{fig:color_rules}$b)$; if any of the orientations of the arcs differ from the ones shown, the corresponding element of the group $G$ should be changed to its inverse.
	\end{enumerate}
\end{definition}
 
\begin{remark}
1) If we take $X = \{x\}$ to be the one element  quandle, then it is easy to see that $X\times G\simeq G$ is a homomorphic image of the fundamental group of the complement of the spatial graph in $\mathbb{R}^3$, which is defined by the diagram $D$ if we put $g * h = h^{-1} g h$.

2) If the diagram $D$ does not contain trivalent vertices, we can take $G = \{e\}$ is the trivial group, then the quandle $(X\times G, \cdot)$ is a homomorphic image of the fundamental quandle of the link which corresponds to the diagram $D$.
 \end{remark}

This construction was originally proposed by Ishii {\em et al.} in \cite{I} for the case of $G$-families of quandles (in this case $f(g, h) = h$ for all $g, h \in G$). It was proved that the following theorem holds:

\begin{theorem}
	\label{thm:color_invariant}
	Let $D$ be an oriented diagram of handlebody-link, and the diagram $D'$ be obtained from $D$ by performing one of the moves described above with orientation which agrees with orientation of $D$ on the corresponding arcs. Then the numbers of proper colourings of $D$ and $D'$ by the associated quandle of a finite $G$-family of finite quandles are equal.
\end{theorem}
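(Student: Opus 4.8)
The plan is to establish the invariance of the colouring count under each of the moves of Figure~\ref{fig:reid_handle} separately (including the mirror variants $R_1',R_2',TR_1',TR_2'$). First I would observe that each move is supported in a disk $\Delta\subset\mathbb{R}^2$, and that $D$ and $D'$ coincide outside $\Delta$; hence it suffices to fix a colouring of the arcs crossing $\partial\Delta$ and to compare, on the two sides, the number of ways of extending it to a proper colouring of the portion of the diagram lying inside $\Delta$. Summing these local counts over all boundary colourings then yields the equality of the global numbers of proper colourings, both of which are finite because $X\times G$ is a finite set. In most moves the local count will turn out to be $1$ on each side; only $TR_2$ and $SR$ require a little more care.

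For the classical Reidemeister moves $R_1,R_2,R_3$ I would use that, by Ishii's theorem recalled above, the pair $(X\times G,\cdot)$ with the operation~(\ref{assoc_quandle_op}) is a quandle, whose inverse operation $\bar{\cdot}$ is given by~(\ref{inv_assoc_quandle_op}). Then invariance under $R_1$ is the idempotency $(x,g)\cdot(x,g)=(x,g)$ (and under $R_1'$ the identity $(x,g)\,\bar{\cdot}\,(x,g)=(x,g)$); invariance under $R_2$ and $R_2'$ is right-invertibility, i.e.\ $\big((x,g)\cdot(y,h)\big)\,\bar{\cdot}\,(y,h)=(x,g)$ together with its mirror; and invariance under $R_3$ is self-distributivity of $\cdot$. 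In each case the boundary colouring determines the interior colouring uniquely on both sides.

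For $TR_1$ and $TR_2$ I would additionally use the trivalent-vertex rule of Figure~\ref{fig:color_rules}$b)$: the three arcs meeting a vertex share a common $X$-element, while their $G$-elements satisfy the appropriate product relation, an arc oriented against the reference orientation contributing the inverse of its $G$-element. For $TR_1$, the extra kink inserted on one of the three edges does not change the colour of that edge (idempotency), so the vertex relation is unaffected and the interior colouring is unique on both sides. For $TR_2$ one checks, using right-invertibility of the operations $*_g$ and axiom (2) of Definition~\ref{GF}, that the two new crossings created next to the vertex admit a unique proper colouring compatible with any admissible boundary data, and that the resulting constraints on the boundary arcs match those on the other side; the various orientations are handled case by case, each amounting to a short computation in $G$ together with the second axiom of Definition~\ref{GF}.

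The decisive step, and the one I expect to be the main obstacle, is the $SR$-move. Here the two local pictures are genuinely different: both involve a pair of trivalent vertices joined by an edge, and one must show that the system of conditions these vertices (together with whatever crossings appear in the picture) impose on the $X$- and $G$-elements of the arcs crossing $\partial\Delta$ is the same on both sides. The equality of the $X$-components is immediate, since on either side all the relevant external arcs are forced to carry one common $X$-element. The real content is the $G$-element bookkeeping: reading off the vertex relations and the quandle colouring conditions on each side, using the identity $x*_{gh}y=(x*_g y)*_h y$ from axiom (2) and the twisted self-distributivity of axiom (3) in Definition~\ref{GF} to rewrite one system of constraints into the other, and thereby exhibiting the desired bijection between the local colourings. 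This is precisely the computation the $G$-family axioms were designed to make work; carrying it out carefully for every orientation pattern of the arcs involved in the move is the bulk of the argument.
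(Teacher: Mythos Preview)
Your overall strategy---localise to a disk, compare the numbers of local extensions move by move, sum over boundary data---is exactly the standard one, and is the approach taken in~\cite{I} (the paper itself only cites the result, but its later proofs of Theorems~\ref{thm:tri-graph-inv} and the handlebody-link theorem follow precisely this pattern). Your treatment of $R_1$--$R_3$ and $TR_1$ is correct.

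However, you have the relative difficulty of $TR_2$ and $SR$ inverted, and your description of what $SR$ requires is wrong. The $SR$ move contains \emph{no crossings at all}: it is the H--I move on two trivalent vertices joined by an edge, so there are no ``quandle colouring conditions'' to read off and neither axiom~(2) nor axiom~(3) of Definition~\ref{GF} plays any role. The $X$-components on both sides are forced to a common value by the vertex rule, as you say, and the $G$-element bookkeeping reduces to the single identity $(u\cdot v)\cdot h = u\cdot(v\cdot h)$ in $G$---bare associativity of the group multiplication. The paper makes exactly this point: ``the $SR$ move does not break the invariance because it only works with the $G$-element colouring conditions''; and in the proof of the handlebody-link theorem the entire $SR$ computation is three lines ending in associativity of $\oplus$.

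The move that actually carries the content is $TR_2$, which the paper calls ``the most `interesting' move in the handlebody-link theory''. There one needs both the identity $x*_{gh}y=(x*_g y)*_h y$ from axiom~(2) (for the $X$-component when the overstrand passes through the merged edge versus the two separate edges) and the compatibility of conjugation with the group product (for the $G$-component). Your one-sentence dismissal of $TR_2$ as ``a short computation'' glosses over the place where the $G$-family axioms are genuinely used, while your $SR$ paragraph invokes axioms that are not needed there. The proof would still go through if you carried out the computations, but your allocation of effort is backwards.
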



\begin{figure}
\centering\includegraphics[width=250pt]{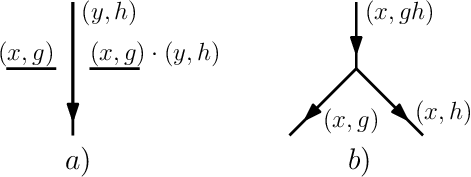}
\caption{Colouring rules for handlebody-links diagrams by a $G$-family of quandles}\label{fig:color_rules}
\end{figure}

The agreement of orientations means the following. For the moves $R_1-R_3, TR_1, TR_2$ there is a natural bijection of the edges of the diagrams before and after the move. Orientations of the corresponding edges must be the same. For the move $SR$ the natural bijection fails: the small horizontal edge in the left-hand side of the move does not correspond to any edge in the right-hand side, and likewise  the vertical edge in the right-hand side does not have a counterpart. In that case we ask all other edges in two sides of the move to have compatible orientations, and these two ``disappearing'' and ``emerging'' edges may be oriented arbitrarily.

This theorem actually gives an invariant of (unoriented) {\em handlebody-links} if we can find a mapping $\rho\colon G\to G$ which is a {\em good involution} (see Definition~\ref{def:good_inv} below). That is the case, because the mapping $\rho(g)\colon g\to g^{-1}$ is indeed a good involution. 

It would be natural to suppose that for $(G,*,f)$-families of quandles some analogous construction should give roughly the same theorem. Surprisingly, it is not the case. It is true, that $G$-family of quandles is a special case of $(G,*,f)$-family with $f(g,h)=h$. The problem is, that for non-trivial functions $f$ it does not seem possible to formulate reasonable conditions under which the theorem holds.

First, let us mention where the general $(G,*,f)$-family approach does {\em not} fail. The moves $R_1-R_3$ do not pose a problem: $(X\times G,\cdot)$  is a quandle and quandle colouring is invariant under Reidemeister moves. Likewise, there is no problem with the $TR_1$ move: we can note that due to the colouring rule of trivalent vertices the $X$-elements of colours of three arcs incident to a trivalent vertex coincide. The required invariance then follows from the fact that $x*_g x = x$ for any $g\in G$ in $(G,*,f)$-family, and an argument parallel to the usual proof of quandle colouring invariance under the first Reidemeister move. Finally, the $SR$ move does not break the invariance because it only works with the $G$-element colouring conditions, and the rules are the same for any $(G,*,f)$-family, including $G$-families.

The obstacle appears when studying the $TR_2$ move. As we can see, the parity of the number of crossings in the lefthand and the righthand sides of this move is different. Hence, before and after the move the operation $f$ would be applied the number of times of different parities as well. Therefore, if $f$ is non-trivial, the invariance may appear only under some very special and complicated conditions.

In a sense $TR_2$ is the most ``interesting'' move in the handlebody-link theory. To overcome this difficulty we have to work in a broader set of objects than just $(G,*,f)$-families. Then, imposing certain conditions (which $G$-families satisfy), we get an invariance theorem.

\subsection{$(X,G,\{*_g\},f,\otimes,\oplus)$-systems} Lets us now generalise the notion of $(G,*,f)-$families of quandles. Then we will impose additional conditionas on it to obtain an invariant of handlebody-links.

We shall go step by step defining the necessary algebraic structures. We begin with the definition of an {\em $(f,\otimes)$-system}. Consider sets $X$ and $G$, a map $f\colon G\times G\to G$, a binary operation $\otimes\colon G\times G\to G$, and a family of binary operations $*_g\colon X\times X \to X$ for each $g\in G$. 

\begin{definition}
	\label{def:ab-system}
	The triple $(X,G,*_g)$ is called {\em an $(f,\otimes)$-system} if the following conditions are satisfied:
	\begin{enumerate}
		\item for all $x\in X$ and $g\in G$, $x*_{f(g,g)} x = x$;
		\item for any $g,h\in G$ and any given $y,z\in X$,  there exists a unique $x\in X$ such that $x*_{f(g,h)} y=z$;
		\item for any $x,y,z\in X$ and $g,h,q\in G$, $$(x *_{f(g, h)} y) *_{f(g \otimes h, q)} z = (x*_{f(g, q)} z)  *_{f(g \otimes q, h \otimes q)}  (y *_{f(h, q)} z).$$
	\end{enumerate}
\end{definition}

Let us call the set $X\times G$ the {\em associated set} of the system, and let the operation $\cdot\colon (X\times G) \times (X\times G) \to X\times G$ be defined as $$(x,g)\cdot(y,h)=(x*_{f(g,h)} y,g \otimes h).$$

\begin{theorem}
	If $(G,\otimes)$ is a quandle, then $(X\times G ,\cdot)$ is a quandle.
\end{theorem}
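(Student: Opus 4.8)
The plan is to verify the three quandle axioms for the operation $\cdot$ on $X \times G$ directly, using the three conditions in the definition of an $(f,\otimes)$-system together with the hypothesis that $(G,\otimes)$ is a quandle. This is essentially the same argument as in Proposition~\ref{prop:adj_f_quandle}, except that the role of the quandle $Q_G = (G,*)$ is now played by $(G,\otimes)$, and the map $f$ is no longer required to interact with $\otimes$ through any compatibility beyond what axiom (3) of Definition~\ref{def:ab-system} already encodes.

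First I would check idempotency: for $(x,g) \in X \times G$ we have $(x,g)\cdot(x,g) = (x *_{f(g,g)} x,\, g \otimes g)$. By condition (1) of the $(f,\otimes)$-system, $x *_{f(g,g)} x = x$, and since $(G,\otimes)$ is a quandle, $g \otimes g = g$; hence $(x,g)\cdot(x,g) = (x,g)$. Next, for right-invertibility, given $(y,h)$ and $(z,q)$ in $X \times G$ I must produce a unique $(x,g)$ with $(x,g)\cdot(y,h) = (z,q)$, i.e. $x *_{f(g,h)} y = z$ and $g \otimes h = q$. Since $(G,\otimes)$ is a quandle, there is a unique $g$ with $g \otimes h = q$; fixing that $g$, condition (2) of the $(f,\otimes)$-system gives a unique $x \in X$ with $x *_{f(g,h)} y = z$. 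So the right translation by $(y,h)$ is a bijection of $X \times G$.

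For self-distributivity I would expand both sides of
\[
\bigl((x,g)\cdot(y,h)\bigr)\cdot(z,q) = \bigl((x,g)\cdot(z,q)\bigr)\cdot\bigl((y,h)\cdot(z,q)\bigr)
\]
exactly as in the proof of Proposition~\ref{prop:adj_f_quandle}. The left side becomes $\bigl((x *_{f(g,h)} y) *_{f(g\otimes h,\, q)} z,\; (g\otimes h)\otimes q\bigr)$, and the right side becomes $\bigl((x *_{f(g,q)} z) *_{f(g\otimes q,\, h\otimes q)} (y *_{f(h,q)} z),\; (g\otimes q)\otimes(h\otimes q)\bigr)$. The equality of the second coordinates is precisely self-distributivity in the quandle $(G,\otimes)$, and the equality of the first coordinates is exactly condition (3) of Definition~\ref{def:ab-system}.

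There is no serious obstacle here: the whole point of the definition of an $(f,\otimes)$-system is that its three axioms are tailored so that the associated operation satisfies the quandle axioms whenever $(G,\otimes)$ is itself a quandle. The only mild subtlety worth stating explicitly is that in checking right-invertibility one uses the quandle property of $(G,\otimes)$ first to pin down $g$, and only then invokes axiom (2) — which is why axiom (2) is phrased for a fixed pair $g,h$ rather than asserting invertibility of a single combined translation on $X\times G$; but this ordering causes no trouble, since $g$ is determined before $x$ is sought.
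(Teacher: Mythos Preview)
Your proposal is correct and follows essentially the same route as the paper: you verify idempotency, right-invertibility, and self-distributivity of $\cdot$ on $X\times G$ by splitting each into its $X$-coordinate (handled by the three axioms of Definition~\ref{def:ab-system}) and its $G$-coordinate (handled by the quandle axioms for $(G,\otimes)$). The only cosmetic difference is that the paper phrases each step as identifying the \emph{necessary and sufficient} condition on $\otimes$, whereas you simply verify sufficiency; your explicit remark about determining $g$ before $x$ in the right-invertibility step is a nice clarification that the paper leaves implicit.
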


\begin{proof}
	Let us consequently check all quandle axioms.
	
	1. {\bf Idempotency.} We need to check the conditions under which $(x,g)\cdot (x,g) = (x,g)$ for all $x\in X, g\in G$. By definition, $$(x,g)\cdot (x,g) = (x*_{f(g,g)} x, \beta(g,g)).$$ By axiom (1) from Definition~\ref{def:ab-system} $x *_{f(g,g)} x = x$ for all $x\in X, g\in G$. Therefore it is necessary and sufficient that $g\otimes g=g$ for all $g\in G$, in other words, the operation $\otimes$ must be idempotent.
	
	2. {\bf Right invertibility.} Consider the equality $(z,q) = (x,g)\cdot (y,h)$ as an equation on $(x,g)$. Writing it explicitly, we get $$(z,q) = (x *_{f(g,h)} y, g \otimes h)$$ which translates into two equations: $z = x *_{f(g,h)} y$ and $q = g \otimes h$. For these equations to have a single solution $(x,g)$ we need the operation $*_{f(g,h)}$ to be right invertible for all $g,h \in G$, and the operation $\otimes$ to be right invertible as well.
	
	3. {\bf Self-distributivity.} We shall write the sequence of equalities explicitly: 
	\begin{multline*}
		((x,g)\cdot (y,h))\cdot (z,q) = (x *_{f(g,h)} y, g \otimes h) \cdot (z,q) = \\ = ((x *_{f(g,h)} y) *_{f(g \otimes h,q)} z, (g \otimes h)\otimes q) = ((x*_{f(g, q)} z)  *_{f(g \otimes q, h \otimes q)}  (y *_{f(h, q)} z), (g \otimes h) \otimes q)
	\end{multline*}
	by axiom (3) from Definition~\ref{def:ab-system}. On the other hand,
	\begin{multline*}
		((x,g)\cdot(z,q))\cdot ((y,h)\cdot(z,q)) = (x*_{f(g,q)} z, g \otimes q)\cdot (y*_{f(h,q)} z, h \otimes q) = \\ ((x*_{f(g, q)} z)  *_{f(g \otimes q, h \otimes q)}  (y *_{f(h, q)} z), (g \otimes q)\otimes (h \otimes q)).
	\end{multline*}
	Therefore, for these expressions to be equal, it is necessary and sufficient that $(g \otimes h) \otimes q = (g \otimes q)\otimes (h \otimes q)$, in other words, for $\otimes$ to be self-distributive.
\end{proof}

A simple check verifies the following

\begin{proposition}
	$G$-family of quandles and $Q$-family of quandles are $(f,\otimes)$-systems with their associated sets being quandles. 
\end{proposition}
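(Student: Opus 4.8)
The plan is to specialise the general setup of Definition~\ref{def:ab-system}: for each of the two families I will write down the map $f$ and the operation $\otimes$ that exhibit it as an $(f,\otimes)$-system, verify conditions (1)--(3) of Definition~\ref{def:ab-system} by matching them term by term with the defining axioms of the family, and then invoke the theorem above (the one asserting that $(X\times G,\cdot)$ is a quandle whenever $(G,\otimes)$ is) to conclude that the associated set is a quandle.

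For a $G$-family of quandles $(X,\{*_g\}_{g\in G})$ I would take $f(g,h)=h$ and let $\otimes$ be the conjugation operation $g\otimes h=h^{-1}gh$, so that $(G,\otimes)=\Conj(G)$ is a quandle. Condition (1) of Definition~\ref{def:ab-system} then reduces to $x*_g x=x$, which is axiom (1) of Definition~\ref{GF}. Condition (2) amounts to solving $x*_h y=z$ uniquely in $x$, which holds because $(X,*_h)$ is a quandle with inverse operation $*_{h^{-1}}$ (a consequence of axiom (2) of Definition~\ref{GF}), the unique solution being $x=z*_{h^{-1}}y$. Condition (3), after substituting $f(g,h)=h$, $f(g\otimes h,q)=q$, $f(g\otimes q,h\otimes q)=q^{-1}hq$ and $f(h,q)=q$, becomes $(x*_h y)*_q z=(x*_q z)*_{q^{-1}hq}(y*_q z)$, which is precisely axiom (3) of Definition~\ref{GF} after renaming $g\mapsto h$, $h\mapsto q$. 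Since $\Conj(G)$ is a quandle, the theorem gives that $X\times G$ with $(x,g)\cdot(y,h)=(x*_h y,h^{-1}gh)$ is a quandle, and this is exactly the associated quandle~(\ref{assoc_quandle_op}).

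For a $Q$-family of quandles $(X,\{*_a\}_{a\in Q})$ over a quandle $(Q,\circ)$ I would take the underlying set of the system to be $Q$, put $f(a,b)=b$, and let $\otimes=\circ$. Condition (1) is again $x*_a x=x$, the first $Q$-family axiom; condition (2) is the bijectivity of $S_{y,b}\colon x\mapsto x*_b y$, the second $Q$-family axiom; and condition (3), after the analogous substitution, becomes $(x*_b y)*_c z=(x*_c z)*_{b\circ c}(y*_c z)$, the third $Q$-family axiom up to renaming $a\mapsto b$, $b\mapsto c$. As $(Q,\circ)$ is itself a quandle, the theorem shows $X\times Q$ with $(x,a)\cdot(y,b)=(x*_b y,a\circ b)$ is a quandle, which is the associated quandle of the $Q$-family.

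I do not expect a genuine obstacle: the argument is pure bookkeeping. The one point that deserves care --- and the only place a naive attempt could go wrong --- is the choice of $\otimes$ for $G$-families: the obvious candidate, the group multiplication on $G$, is neither idempotent nor self-distributive and hence cannot serve as $\otimes$; one must instead use conjugation $g\otimes h=h^{-1}gh$, and it is exactly this choice that makes condition (3) of Definition~\ref{def:ab-system} collapse onto axiom (3) of a $G$-family and that reproduces the classical associated quandle.
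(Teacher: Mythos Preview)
Your proof is correct and is precisely the routine verification the paper leaves to the reader: the paper states only that ``a simple check verifies'' the proposition and gives no further argument. Your specialisations $f(g,h)=h$, $g\otimes h=h^{-1}gh$ for the $G$-family and $f(a,b)=b$, $a\otimes b=a\circ b$ for the $Q$-family are exactly the intended ones, and your closing caveat about why the group multiplication on $G$ cannot play the role of $\otimes$ is apt.
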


From now on we will consider this condition to be satisfied (since our goal is to construct colouring invariants of graphs by $(X\times G,\cdot)$). This way we defined a rich family of ``enhanced'' quandles $(X\times G,\cdot)$ for any quandle $(G,\otimes)$, giving colouring invariants of links. To obtain invariants of handlebody-links and spatial graphs, we need more instruments yet.

\begin{definition}
	\label{def:good_inv}
	Let $(Q,*)$ be a quandle. A mapping $\rho\colon Q\to Q$ is called a {\em good involution} if the following conditions are satisfied:
	\begin{enumerate}
		\item $\rho^2=id$,
		\item for any $u,v\in Q$, $(u*v)*\rho(v)=u$;
		\item for any $u,v\in Q$, $\rho(u)*v=\rho(u*v)$.
	\end{enumerate}
\end{definition}

\begin{remark}
In literature (see, for example, \cite{I2}) one can find another definition of good involution. There instead of the condition (2) one has the axiom (2') $u * \rho(v) = u \bar{*} v$, where $\bar{*}$ denotes the inverse of $*$ (that is, if one has equation $z = x * y$ which has a unique $x$ as a solution, then $x = z \bar{*} y$, and that defines the operation $\bar{*}$). These definitions are equivalent. Indeed, let us begin with the axiom (2'): for any $u,v\in Q$, $u * \rho(v) = u \bar{*} v$. It is equivalent to $(u * v) * (\rho(v) * v) = u$. Due to axiom (3) (replacing $u$ with $v$) we have $\rho(v) * v = \rho (v * v) = \rho(v)$, therefore $u = (u * v) * (\rho(v) * v) = (u * v) * \rho(v)$, and hence we get axiom (2). All transitions here were equivalences, therefore the defintions are equivalent.
\end{remark}

The notion of good involution is important in colouring invariants theory for the following reason. Consider a quandle $(Q,*)$, a good involution $\rho$,  and a link diagram $D$. Let us fix some orientation of the diagram $D$ and denote by $D'$ the diagram obtained by inverting the orientation. For each proper colouring $c(D)$ define the colouring $c(D')$ by replacing each colour $g\in Q$ of the arcs of the diagram with $\rho(g)$. It is easy to check that the number of proper colourings $c(D)$ equals the number of proper colourings $c(D')$. Hence, the quandle defines a colouring invariant of an {\em unoriented} link.

Without going into detail let us also mention that exactly the fact that the operation $\rho\colon (x,g)\mapsto (x,g^{-1})$ is a good involution in case of $G$-families of quandles makes the Ishii colouring invariant an invariant of handlebody-links. 

\begin{definition}
	An $(f,\otimes)$-system is called an {\em $(f,\otimes)$-system with good involution} if it is equipped with a mapping $\rho\colon X\times G\to X\times G$ which is a good involution on the associated quandle $(X\times G,\cdot)$. 
\end{definition}

Given an involution $\rho$ on the associated set $X\times G$ we shall write $\rho(x,g)=(\rho_g(x),\rho_x(g))$ bearing in mind that $\rho(x)$ depends on $g$ and $\rho(g)$ depends on $x$.

Now let us introduce one more binary operation $\oplus\colon G\to G$.

\begin{definition}
	\label{abg-system+}
	An $(f,\otimes)$-system with the binary operation $\oplus$ is called an {\em $(X,G,\{*_g\},f,\otimes,\oplus)$-system} if for all $x,y\in X, g,h\in G$, $$(x*_g y)*_h y=x*_{g \oplus h} y.$$
\end{definition}

The construction of $(X,G,\{*_g\},f,\otimes,\oplus)$-system is very general. For example, if $G$ is a group, we set $g \otimes h=g*h, \,g \oplus h=gh$, strengthen the first condition to $x*_g x=x$ for all $g\in G$, and add the condition $x*_e y=x$, then we get exactly the $(G,*,f)$-family of quandles. Furthermore, let $f(g,h)=h, \,g \otimes h=g, \,g \oplus h=gh$. If all operations $*_g$ are right invertible, we get a family of quandles closed under quandle multiplication as described in~\cite{BF}. Elementwise, we can describe it in the associated set language. If we set $(x,g)\circ (y,h) := (x*_{gh} y, gh)$, this operation adequately describes the quandle multiplication: to multiply $x\in (X,*_g)$ by $y\in (X,*_h)$ we first consider them both as elements of $(X,*_{gh})$ and then multiply inside this quandle.

To obtain an invariant of trivalent graphs (which are encoded by planar diagrams modulo moves $R_1-R_3, TR_1, TR_2$) we need to impose some additional conditions on a $(X,G,\{*_g\},f,\otimes,\oplus)$-system.
	
\begin{definition}	\label{def:tr-comp}
	Consider a $(X,G,\{*_g\},f,\otimes,\oplus)$-system and an involution $\rho$ on $G$. Let us extend it on the associated quandle by the rule $\rho(x,g)=(x,\rho_x(g))$ for all $x\in x, g\in G$. This system is called {\em trivalent-compatible} if the following conditions are satisfied:
	\begin{enumerate}
		\item for all $g,h\in G$, $h\oplus (g\otimes h) = g\oplus h$;
		\item the mapping $f$ depends only on its second argument: $f(g,h)=:f(h)$;
		\item for all $g,h,q\in G$, $g\otimes (h \oplus q)=(g \otimes h)\otimes q$ (twisted associativity); 
		\item for all $g,h,q\in G$, $f(h \oplus q)=f(h)\oplus f(q)$ (``additivity'' of $f$);
		\item for all $g,u,v\in G$, $(u \oplus v)\otimes g=(u \otimes g)\oplus (v \otimes g)$ (distributivity);
		\item for all $g,h\in G$, $h\oplus \rho_x(g \oplus h)=\rho_x(g), \, \rho_x(g \oplus h)\oplus g=\rho_x(h)$ for all $x\in X$.
	\end{enumerate}
\end{definition}

\begin{remark}
	Note that the definition of trivalent-compatible system does not require the involution $\rho$ to be good; it is stated for {\em any} involution $\rho$, though the involution must be good in order to construct invariants of trivalent graphs or handlebody-links. For an involution to be good on the associated quandle, it should be good on the quandle $G$, and in addition for all $x,y \in X, g\in G$ the following condition must hold: $(x*_{f(g)} y)*_{\rho(f(g))} y = x$. 
\end{remark}

Consider a diagram of a spatial trivalent graph and an associated quandle of a $(X,G,\{*_g\},f,\otimes,\oplus)$-system. We define a {\em proper colouring} of the diagram in the following way. First, we orient all edges of the diagram arbitrarily. Then colour each edge of the (oriented) diagram by the elements of the associated quandle $(X,G)$. The colouring is proper if at every crossing and trivalent vertex the conditions shown in Fig.~\ref{fig:color_rules_general} are satisfied. Edge orientation reversal corresponds to replacing the colour with its image under the involution $\rho$, that is $(x,g)$ goes to $\rho(x,g)$. In particualr, if at a trivalent vertex orientations of the edges do not match the ones on Fig~\ref{fig:color_rules_general}b), one should reverse the orientations and change the colours appropriately, then check if the colouring is proper. 

\begin{figure}
\centering\includegraphics[width=250pt]{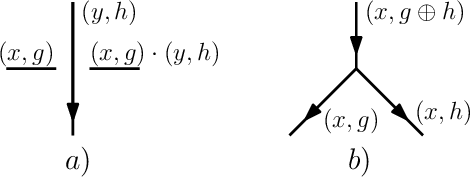}
\caption{Colouring rules for handlebody-links diagrams in the general case}\label{fig:color_rules_general}
\end{figure}

The following theorem holds by construction:

\begin{theorem} \label{thm:tri-graph-inv}
	Consider a trivalent-compatible $(X,G,\{*_g\},f,\otimes,\oplus)$-system. Let $(X\times G,\cdot)$ be a quandle and let $\rho\colon (x,g)\mapsto (x,\rho(g))$ be a good involution. Then the number of proper colourings with the associated quandle $(X\times G,\cdot)$ is an invariant of spatial trivalent graphs.
\end{theorem}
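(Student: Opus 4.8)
The plan is to prove Theorem~\ref{thm:tri-graph-inv} by checking that the cardinality of the set of proper colourings is unchanged under every move that generates equivalence of spatial trivalent graphs, namely $R_1,R_2,R_3,TR_1,TR_2$ together with the variants $R_1',R_2',TR_1',TR_2'$ obtained by exchanging over- and under-crossings. For each move it suffices to exhibit a bijection between the proper colourings of a diagram $D$ and those of the diagram $D'$ produced by the move which is the identity on all arcs lying outside the small disk where the move is performed; the six conditions defining a trivalent-compatible $(X,G,\{*_g\},f,\otimes,\oplus)$-system (Definition~\ref{def:tr-comp}) were tailored exactly so that such bijections exist, which is why the statement ``holds by construction''. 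Before treating the moves I would neutralise orientations: a proper colouring was defined on an arbitrarily oriented diagram, and reversing the orientation of a single edge while replacing its colour $(x,g)$ by $\rho(x,g)=(x,\rho_x(g))$ sends proper colourings to proper colourings bijectively. At crossings this is precisely the statement that $\rho$ is a good involution on $(X\times G,\cdot)$ (Definition~\ref{def:good_inv} and the remark following Definition~\ref{def:tr-comp}); at a trivalent vertex it follows from condition~(6) of Definition~\ref{def:tr-comp} once one observes that the rule of Fig.~\ref{fig:color_rules_general}$b)$ forces the three incident arcs to share one common $X$-element. Hence, when testing a move, one may fix whatever orientations in the disk are most convenient.

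The moves $R_1,R_2,R_3$ involve no trivalent vertex, so the local colouring condition at each crossing is simply the equation read off Fig.~\ref{fig:color_rules_general}$a)$ inside the quandle $(X\times G,\cdot)$, and the classical proof of invariance of quandle colourings under the Reidemeister moves applies verbatim: idempotency handles $R_1$, right-invertibility handles $R_2$, self-distributivity handles $R_3$, and the primed variants use the inverse quandle operation. For $TR_1$ the kink carries a trivalent vertex; tracing the colour around the loop and resolving the single self-crossing, the resulting consistency condition collapses — using that $f$ depends only on its second argument (condition~(2)), the idempotency axiom $x*_{f(g,g)}x=x$ of the underlying $(f,\otimes)$-system, and conditions~(1) and~(6) of Definition~\ref{def:tr-comp} — to an identity which is automatically satisfied, while the colour of the through-arc after the move is uniquely determined. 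This yields the required bijection; $TR_1'$ is symmetric.

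The crucial and most delicate case is $TR_2$, where the two sides of the move carry numbers of crossings of opposite parity, so that $f$ is applied a different number of times on the two sides and one must show the net effects coincide. The decisive identity is the defining relation of an $(X,G,\{*_g\},f,\otimes,\oplus)$-system, $(x*_g y)*_h y=x*_{g\oplus h} y$ (Definition~\ref{abg-system+}): on the two-crossing side the effect on the $X$-coordinate has the form $(x*_{f(g)} y)*_{f(h)} y$, which this relation rewrites as $x*_{f(g)\oplus f(h)} y$, and then the additivity condition~(4), $f(h)\oplus f(q)=f(h\oplus q)$, turns it into $x*_{f(g\oplus h)} y$ — exactly the $X$-coordinate produced on the trivalent-vertex side, where the two $G$-elements being merged combine to $g\oplus h$. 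The residual bookkeeping on the $G$-coordinate is then forced to match by the twisted associativity $g\otimes(h\oplus q)=(g\otimes h)\otimes q$ (condition~(3)), the distributivity $(u\oplus v)\otimes g=(u\otimes g)\oplus(v\otimes g)$ (condition~(5)), and condition~(1), $h\oplus(g\otimes h)=g\oplus h$. The edge created (or destroyed) by $TR_2$ may be oriented either way; the two choices are reconciled by condition~(6) and the good involution, exactly as in the orientation reduction above. Assembling these identities shows that the local systems of colouring equations on the two sides of $TR_2$ have the same solution set, compatibly with any fixed colouring of the surrounding arcs, and $TR_2'$ follows by replacing the relevant operations by their inverses. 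Collecting the bijections obtained for all generating moves gives the invariance of the number of proper colourings, which proves the theorem.

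I expect the main obstacle to be the careful analysis of $TR_2$: beyond the parity mismatch, one must orchestrate conditions~(1)--(6) of Definition~\ref{def:tr-comp} simultaneously with the $\oplus$-axiom and with the good-involution properties at the trivalent vertex, all while tracking the a priori free orientation of the arc that appears or disappears. The $R$-moves and $TR_1$, by contrast, become essentially routine once the orientation reduction is in place.
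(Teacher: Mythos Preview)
Your proposal is correct and follows essentially the same approach as the paper: you verify invariance move-by-move, handling $R_1$--$R_3$ via the quandle axioms and showing that the conditions of Definition~\ref{def:tr-comp} are precisely what is needed to build bijections for $TR_1$, $TR_2$, and the orientation changes at a trivalent vertex. The only organizational difference is that you front-load the orientation reduction (via the good involution and condition~(6)) whereas the paper treats it last, and you invoke a slightly larger set of conditions at $TR_1$ than the paper's terse argument records; neither affects the substance.
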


\begin{proof}
	The conditions from Definition~\ref{def:tr-comp} were obtained by considering each move on spatial trivalent graph diagrams and establishing the conditions which ensure bijection between the sets of proper colourings of the diagrams before and after the move. Let us note that due to $\rho$ being good involution, proper colouring definition for trivalent vertices, and condition 6 (see discussion below) we can orient arcs of the diagram arbitrarily.
	
	The number of proper colourings is naturally invariant under the ordinary Reidemeister moves $R_1, R_2, R_3$ because the colouring is done with a quandle. We now need to consider the moves appearing in the spatial trivalent-graph theory and one additional vertex rotation operation.
	
\begin{figure}
\centering\includegraphics[width=250pt]{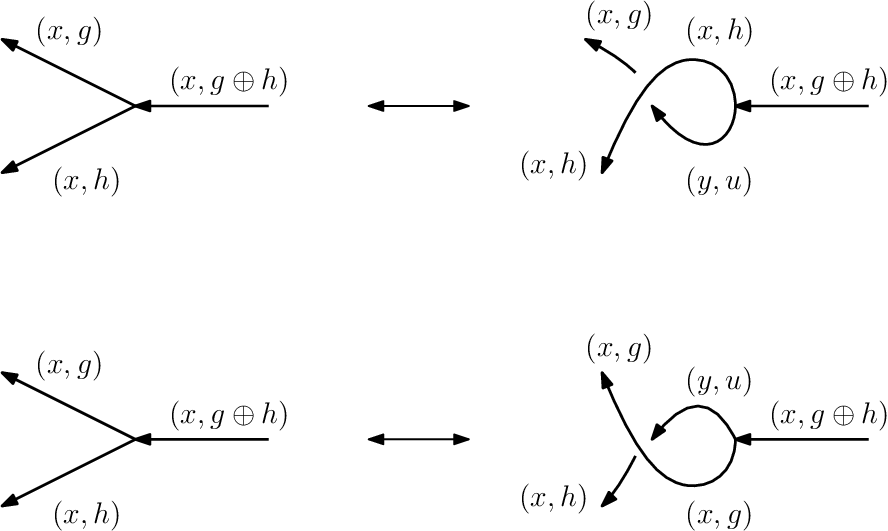}
\caption{The move $TR_1$}\label{fig:TR1}
\end{figure}

\begin{enumerate}
	\item We begin with the move $TR_1$, see Fig.~\ref{fig:TR1}. Let us consider one type of the move, see the top row of the figure. We get $$(y,u) = (x, g\otimes h),$$ $$h\oplus u = g\oplus h,$$ therefore $h\oplus (g\otimes h) = g\oplus h,$ which is condition 1. The other type of the move gives the same condition.
	\begin{remark}
		Note that here (as well as in the consideration of other moves below) we actually only need to establish conditions, which guarantee compatibility of all relations appearing from the move. That stems from the fact, that in that case all ``new'' colours appearing during the move can be correctly defined by these relations, and hence the number of proper colouring will be preserved by the move.
	\end{remark}
	\item Now let us consider the second move, Fig.~\ref{fig:TR2}. From the first type of the move we get:
	\[
	\begin{cases}
	(z_1,w_1)\cdot (x,g) = (y,u) \\
	(z_2,w_2)\cdot (x,g) = (y,v) \\
	(z_3,w_3)\cdot (x,g) = (y,u\oplus v) \\
	z_1=z_2=z_3=:z \\
	w_3 = w_1\oplus w_2
	\end{cases}
	\quad \Rightarrow \quad
	\begin{cases}
	(z,w_1)\cdot (x,g) = (y,u) \\
	(z,w_2)\cdot (x,g) = (y,v) \\
	(z,w_1\oplus w_2)\cdot (x,g) = (y,u\oplus v).
	\end{cases}
	\]
Since these are equations on the elements of the associated quandle, each is broken into two (for the first and the second element of the pair). First elements give us the relation $$y=z*_{f(w_1,g)} x = z*_{f(w_2,g))} x = z*_{f(w_1\oplus w_2,g)} x$$ which is guaranteed by condition 2. Second elements give us
	\[
	\begin{cases}
		w_1\otimes g = u \\
		w_2 \otimes g = v \\
		(w_1\oplus w_2) \otimes g = u\oplus v
	\end{cases}
	\quad \Rightarrow \quad
	(w_1 \oplus w_2)\otimes g=(w_1 \otimes g)\oplus (w_2 \otimes g),
	\]
	and that is condition 5 up to notation change.
	
	Interestingly enough, this time the second type of the move (Fig.~\ref{fig:TR2}, bottom row) gives us different relations. To be precise, we get
	\[
	\begin{cases}
		(z_1,w_1)\cdot (y,u) = (x,g) \\
		(x,g)\cdot (y,v) = (z_2,w_2) \\
		(z_1,w_1)\cdot (y,u\oplus v) = (z_2, w_2)
	\end{cases}
	\quad \Leftrightarrow \quad
	\begin{cases}
		z_1 *_{f(w_1,u)} y = x \\
		z_1 *_{f(w_1,u\oplus v)} y = z_2 \\
		x *_{f(g,v)} y = z_2 \\
		w_1 \otimes u = g \\
		w_1 \otimes (u\oplus v) = w_2 \\
		g\otimes v = w_2.
	\end{cases}
	\]
	From the second triple of equations we immediately get $w_1 \otimes (u\oplus v) = (w_1 \otimes u) \otimes v$, which is condition 3. From the first triple we get $(z_1 *_{f(w_1,u)} u) *_{f(g,v)} y = z_1 *_{f(w_1, u\oplus v)} y$. Transforming the left-hand side of this equality by definition of the system we get $$z_1 *_{f(w_1, u)\oplus f(g, v)} y = z_1 *_{f(w_1, u\oplus v)} y.$$ Taking into account that $w_1 \otimes u = g$, we get $$*_{f(w_1,u)\oplus f(w_1\otimes u, v)} = *_{f(w_1, u\oplus v)}$$ which is guaranteed by condition 4 together with condition 2. 
	
	\item Finally we shall discuss condition 6. As we said before, one can reverse orientations of edges together with applying the involution $\rho$ to the corresponding colour. Therefore, any edge of a trivalent vertex can be considering ``incoming'', and other two may be considered ``outcoming''. Hence we need the compatibility condition 6 to cover that freedom. Fig.~\ref{fig:trivalent_rotate} shows this procedure. We take a properly coloured trivalent vertex, then rotate it one of two ways (depicted in the top and bottom rows of the figure), and then reorient vertices together with changing the colours. From the rightmost figure of the top row we get the condition $h\oplus \rho_x(g \oplus h)=\rho_x(g)$, and from the rightmost figure of the bottom row we get condition $\rho_x(g \oplus h)\oplus g=\rho_x(h)$. These two conditions together constitute condition 6 of the theorem.

\begin{figure}
\centering\includegraphics[width=250pt]{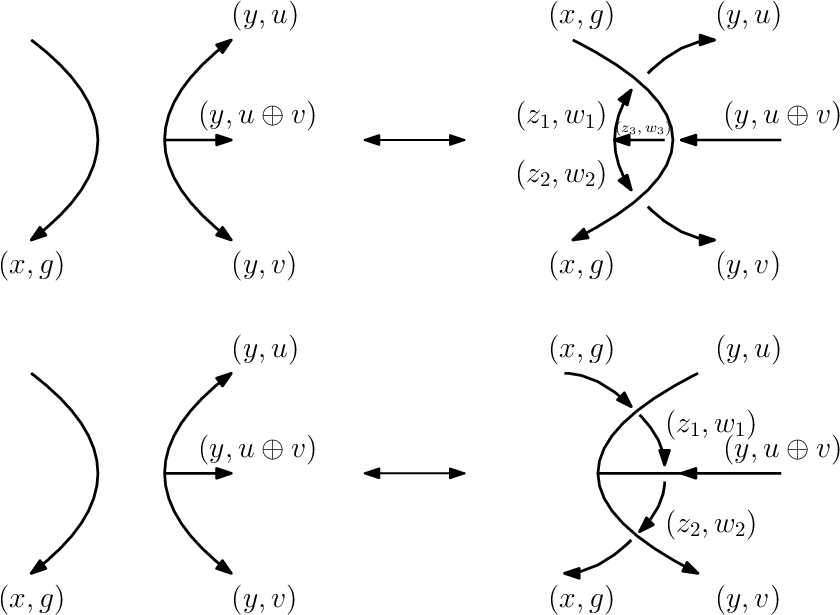}
\caption{The move $TR_2$}\label{fig:TR2}
\end{figure}

\end{enumerate}


\begin{figure}
\centering\includegraphics[width=300pt]{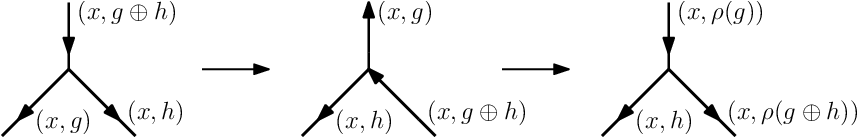}
\caption{Rotation of a trivalent vertex and the corresponding orientation and colour change}\label{fig:trivalent_rotate}
\end{figure}

We considered all moves on spatial trivalent graphs. Together with compatibility condition 6 the obtained conditions guarantee that equations appearing on two sides of each move are compatibale and uniquely determine all emerging colours. Therefore, the number of proper colourings remains unchanged by the moves, and hence is an invariant. \end{proof}

The final upgrade of our algebraic structure goes as follows:

\begin{definition} \label{def:assoc}
	An $(X,G,\{*_g\},f,\otimes,\oplus)$-system is said to have {\em associative composition function} if for all $g,h,q\in G$, $$g\oplus (h \oplus q)=(g \oplus h)\oplus q.$$
\end{definition}

The associativity condition is needed exactly for the number of colourings to be invariant under the move $SR$, therefore we get the following theorem:

\begin{theorem}
	Consider a trivalent-compatible $(X,G,\{*_g\},f,\otimes,\oplus)$-system with associative composition function. Let $\rho$ be good involution and the associated set be a quandle. Then the number of proper colourings with the associated quandle (defined in the same way as for spatial trivalent graphs) is an invariant of handlebody-links.
\end{theorem}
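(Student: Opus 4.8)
\subsection*{Proof plan for the final theorem}

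The plan is to reduce everything to the single move $SR$. By the combinatorial description recalled around Fig.~\ref{fig:reid_handle}, two diagrams represent equivalent handlebody-links if and only if they are connected by a sequence of the moves $R_1$--$R_3$, $TR_1$, $TR_2$, $SR$, together with the primed versions of $R_1$, $R_2$, $TR_1$ and $TR_2$. The hypotheses we are given --- the system is trivalent-compatible, the associated set $(X\times G,\cdot)$ is a quandle, and $\rho\colon (x,g)\mapsto (x,\rho_x(g))$ is a good involution --- are exactly those of Theorem~\ref{thm:tri-graph-inv}. Hence that theorem already supplies invariance of the number of proper colourings under $R_1$--$R_3$, $TR_1$, $TR_2$, and, by goodness of $\rho$ together with condition 6 of Definition~\ref{def:tr-comp}, under arbitrary reorientation of edges and rotation of trivalent vertices. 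So the entire remaining content of the theorem is the invariance of the colouring count under the $SR$ move, and this is where the new hypothesis --- associativity of $\oplus$, Definition~\ref{def:assoc} --- must enter.

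Next I would analyse the $SR$ move locally. After orienting the arcs, the move fixes the four external arcs of the tangle and replaces the pair of trivalent vertices together with the single internal arc (horizontal on one side of the move, vertical on the other) by another pair of vertices and internal arc. Since the three arcs incident to any trivalent vertex share a common $X$-element and the five arcs of the $SR$-tangle form a connected graph, all of them carry one and the same $X$-element; thus the move imposes no constraint whatsoever on $X$-elements, and the whole question lives in the $G$-components. There the $SR$ move is a re-association of two trivalent vertices: the vertex colouring rule produces on the distinguished (outgoing) arc the $\oplus$-combination of the $G$-colours of the other two arcs, with an application of $\rho_x$ whenever an orientation is reversed. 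Writing this rule at both vertices of the left-hand pattern and eliminating the colour of the internal arc leaves, as the only surviving constraint, an identity of the shape ``output $=(g_1\oplus g_2)\oplus g_3$'' in the external $G$-colours $g_1,g_2,g_3$, whereas performing the same elimination on the right-hand pattern gives ``output $=g_1\oplus(g_2\oplus g_3)$''. By Definition~\ref{def:assoc} these two constraints coincide; and for orientation configurations other than the reference one, condition 6 of Definition~\ref{def:tr-comp} together with the good-involution identities brings the constraint back to the one just displayed, so the two sides of the move always cut out the same set of admissible external colourings.

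Finally I would promote this to a bijection of colourings, exactly as in the proof of Theorem~\ref{thm:tri-graph-inv}: given a proper colouring of a diagram containing the left pattern of an $SR$ move, its restriction to the external arcs satisfies the common constraint; keeping those colours and assigning to the emerging internal arc the unique colour dictated by the vertex rule yields a colouring of the new diagram which is proper, since the one remaining vertex relation is precisely that constraint. The construction is reversible, so the two diagrams have the same number of proper colourings; combined with the reduction in the first paragraph, the colouring count is invariant under all the moves generating handlebody-link equivalence, hence is an invariant of handlebody-links. I expect the only real work --- and the sole genuinely new point beyond Theorem~\ref{thm:tri-graph-inv} --- to be the orientation bookkeeping for $SR$: one must verify that every choice of orientations of the four external arcs and of the disappearing/emerging internal arc leads, after the insertions of $\rho_x$ forced by conditions 1--6 of Definition~\ref{def:tr-comp} and by the goodness of $\rho$, to one and the same relation among the external colours, and that this relation is exactly an instance of $g\oplus(h\oplus q)=(g\oplus h)\oplus q$. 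Once this finite case analysis is carried out, no further computation is needed.
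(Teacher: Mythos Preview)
Your proposal is correct and follows essentially the same approach as the paper: reduce to the single remaining move $SR$ via Theorem~\ref{thm:tri-graph-inv}, observe that all five arcs of the $SR$-tangle carry the same $X$-element, and verify that the $G$-constraints on the two sides differ exactly by an instance of associativity of $\oplus$. The paper's own proof is in fact terser than yours --- it writes the equations for a single reference orientation of Fig.~\ref{fig:SR} and reads off $(u\oplus v)\oplus\rho(h)=u\oplus(v\oplus\rho(h))$ without spelling out the orientation case analysis or the bijection argument you outline --- but the substance is identical.
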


\begin{proof}
We need to consider only $SR$ move since all other moves were already covered in the previous theorem; it is depicted in Fig.~\ref{fig:SR}. From the left-hand side we deduce that
	\[
	\begin{cases}
		g = \rho(l)\oplus \rho(h) \\
		\rho(l) = u\oplus v 
	\end{cases}
	\quad \Rightarrow \quad
	g = (u\oplus v)\oplus \rho(h).
	\]
	Likewise, from the right-hand side we get
	\[
	\begin{cases}
		g = u\oplus w \\
		w= v\oplus \rho(h) 
	\end{cases}
	\quad \Rightarrow \quad
	g = u\oplus (v\oplus \rho(h)).
	\]	
	Therefore, $(u\oplus v)\oplus \rho(h) = u\oplus (v\oplus \rho(h))$ for all $u,v,h\in G$, that is, $\oplus$ operation must be associative.
\end{proof}

\begin{figure}
\centering\includegraphics[width=250pt]{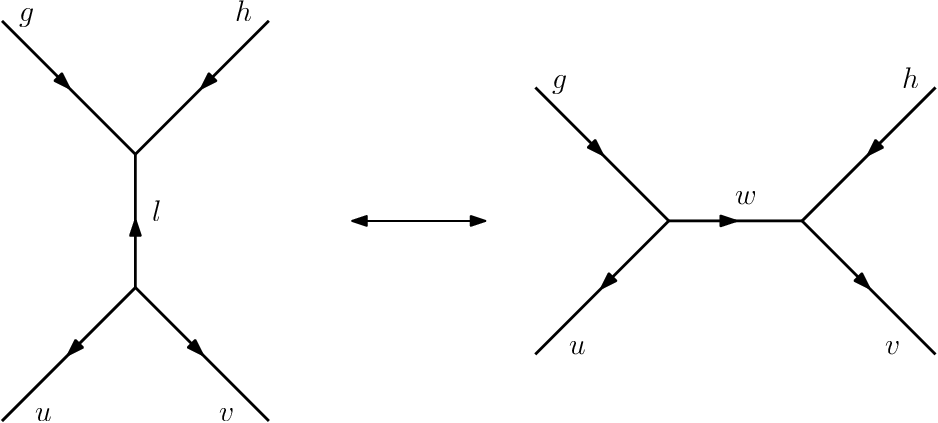}
\caption{The move $SR$}\label{fig:SR}
\end{figure}

\begin{remark}
	We shall remark that multiple trivial quandle, multiple Takasaki quandle, multiple Core quandle, multiple Alexander quandle and other multiple quandles are {\em not} $(X,G,\{*_g\},f,\otimes,\oplus)$-systems. Nevertheless, they can be though of in the similar manner if the composition operation $\oplus$ would be allowed to be defined not on all $G\times G$ but on some subset thereof.
\end{remark}

\begin{example}
	It turns out that {\em axets} (see~\cite{InroyShpec}) give an example of (trivalent-compatible) $(X,G,\{*_g\},f,\otimes,\oplus)$-systems. Recall that if $S$ is a group, an {\em $S$-axet} is a triple $(X,G,\tau)$, where $X$ is a set, $G$ is a group acting on $X$, and $\tau\colon X\times S\to G$, $\tau\colon (x,s)\mapsto \tau_x(s)$, such that
	\begin{enumerate}
		\item for any $(x,s)\in X\times S$,  $\tau_x(s)\in G_x$ where $G_x$ is the stabiliser of $x$;
		\item for any $x \in X$, $s, s'\in S$, $\tau_x(s)\tau_x(s')=\tau_x(ss')$;
		\item  for any $ x\in X$, $s\in S, g\in G$, $\tau_{gx}(s)=g\tau_x(s)g^{-1}$, where $gx$ is the image of the action of $g$ on $x$.
	\end{enumerate}
\end{example}

\begin{proposition}
	Let $S$ be a group and $(X,G,\tau)$ be an $S$-axet. Define $f,\beta\colon S\times S\to S$
	$$f(s,s')=s', \;\;\; s\otimes s'=s;$$
	and $\circ_s\colon X\times X\to X$ for each $s\in S$ by the rule
	$$x\circ_s y=\tau_y(s)x.$$
	Then this axet is an $(f,\otimes)$-system.
\end{proposition}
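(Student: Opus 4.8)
The plan is to verify directly the three conditions of Definition~\ref{def:ab-system} for the system whose two underlying sets are $X$ and $S$, whose indexing ``group'' is $S$ itself (so the r\^ole of ``$G$'' in Definition~\ref{def:ab-system} is played by $S$, not by the acting group $G$ of the axet), whose family of operations is $\circ_s$, $s\in S$, and whose auxiliary data are $f(s,s')=s'$ and $s\otimes s'=s$. The only inputs needed are the axet axioms, and in fact only the first and the third will be used here; the second axet axiom $\tau_x(s)\tau_x(s')=\tau_x(ss')$ is the one that becomes relevant later, when a composition operation $\oplus$ is added (presumably $s\oplus s'=ss'$).

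For condition (1) of Definition~\ref{def:ab-system}, since $f(s,s)=s$ we must check $x\circ_s x=x$, that is $\tau_x(s)x=x$; this is immediate from axet axiom (1), which says $\tau_x(s)\in G_x$, the stabiliser of $x$. For condition (2), since $f(g,h)=h$, the equation $x\circ_h y=z$ reads $\tau_y(h)x=z$; as $\tau_y(h)\in G$ acts on $X$ as a bijection, this has the unique solution $x=\tau_y(h)^{-1}z$, so existence and uniqueness hold.

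The substantive point is condition (3). Unwinding the definitions one first notes the bookkeeping $f(g\otimes h,q)=f(g,q)=q$ and $f(g\otimes q,h\otimes q)=f(g,h)=h$ (because $g\otimes h=g$ and $\otimes$ ignores its second argument, while $f$ ignores its first). Then the left-hand side of (3) becomes $(x\circ_h y)\circ_q z=\tau_z(q)\bigl(\tau_y(h)x\bigr)$, and the right-hand side becomes $(\tau_z(q)x)\circ_h(\tau_z(q)y)=\tau_{\tau_z(q)y}(h)\,\tau_z(q)x$. Applying axet axiom (3) with group element $\tau_z(q)$ gives $\tau_{\tau_z(q)y}(h)=\tau_z(q)\,\tau_y(h)\,\tau_z(q)^{-1}$, so the right-hand side collapses to $\tau_z(q)\tau_y(h)x$, which is exactly the left-hand side.

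No step here is a genuine obstacle: the axet conjugation axiom is precisely what is needed to match the two parenthesisations, and axet axiom (1) is precisely the idempotency input. The only care required is in tracking which of $f$ and $\otimes$ is evaluated where; once that substitution bookkeeping is laid out, the computation closes immediately, which is why the statement is recorded as a ``simple check'' type proposition.
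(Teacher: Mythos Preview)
Your proof is correct and follows essentially the same approach as the paper's own proof: both verify the three axioms of Definition~\ref{def:ab-system} directly, using axet axiom~(1) for idempotency, invertibility of the $G$-action for right-invertibility, and the conjugation axet axiom~(3) for the distributivity condition. Your version is slightly more explicit in writing out the $f$ and $\otimes$ bookkeeping before the computation, but the substance is identical.
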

\begin{proof}
	Let us check the axioms of $(f,\otimes)$-systems.
	
	1. Since $\tau_x(s)$ lies in the stabilizer of $x$, we have $x\circ_s x=\tau_x(s)x=x$.
	
	2. We need to check that the equation $x\circ_s y=z$ has a unique solution $x$ for all $y, z$. Indeed, we have $x=(\tau_y(s))^{-1}z$.
	
	3. The $\otimes$-selfdistributivity axiom is checked as follows. On one hand, $$(x\circ_s y)\circ_{s'} z = \tau_z(s')(\tau_y(s)x)=\tau_z(s')\tau_y(s)x.$$ On the other hand, due to the definition of $\otimes$, $$(x\circ_{s'} z)\circ_s(y\circ_{s'} z) = \tau_{\tau_z(s')y}(s)(\tau_z(s)x)=$$ $$=\tau_z(s')\tau_y(s)(\tau_z(s'))^{-1}\tau_z(s')x=\tau_z(s')\tau_y(s)x,$$ as requested.
\end{proof}

Now let $\im \tau$ be a subgroup in $G$. That means that for each $(x,s)\in X\times S$ there exists $(y,s')\in X\times S$ such that $\tau_y(s')=(\tau_x(s))^{-1}$. Then the following holds:

\begin{proposition}
	Define $\rho\colon X\times S\to X\times S$ by the rule $\rho(x,s)=(\rho_s(x),\rho_x(s))$ such that $$\tau_{\rho_s(x)}(\rho_x(s))=(\tau_x(s))^{-1}.$$ Then $\rho$ is a good involution on the associated quandle $(X\times S, \cdot)$.
\end{proposition}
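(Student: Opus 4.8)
The plan is to verify directly the three axioms of Definition~\ref{def:good_inv} for $\rho$ on the associated quandle $(X\times S,\cdot)$. The first step is to write the quandle operation out explicitly. By the preceding proposition the system is the $(f,\otimes)$-system with $f(s,t)=t$, $s\otimes t=s$ and $x\circ_s y=\tau_y(s)x$, so its associated operation is
$$(x,s)\cdot(y,t)=\big(x\circ_{f(s,t)}y,\ s\otimes t\big)=\big(\tau_y(t)x,\ s\big);$$
the $S$-coordinate is merely carried along, while the $X$-coordinate is moved by the group element $\tau_y(t)$. Throughout I will use the two elementary consequences of axet axiom (2), namely $\tau_x(e)=e$ and hence $\tau_x(s)^{-1}=\tau_x(s^{-1})$, together with the conjugation rule $\tau_{gx}(s)=g\,\tau_x(s)\,g^{-1}$ of axet axiom (3).

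For axiom (1), $\rho^2=\id$, one applies the defining relation twice: writing $\rho(x,s)=(\rho_s(x),\rho_x(s))$ we have $\tau_{\rho_s(x)}(\rho_x(s))=\tau_x(s)^{-1}$, and applying $\rho$ once more produces an element whose $\tau$-value is $(\tau_x(s)^{-1})^{-1}=\tau_x(s)$; for the canonical choice $\rho_s(x)=x,\ \rho_x(s)=s^{-1}$ this element is literally $(x,s)$, so $\rho$ is an involution. Axiom (2) is the computational core:
$$\big((x,s)\cdot(y,t)\big)\cdot\rho(y,t)=\big(\tau_y(t)x,s\big)\cdot\big(\rho_t(y),\rho_y(t)\big)=\big(\tau_{\rho_t(y)}(\rho_y(t))\,\tau_y(t)\,x,\ s\big),$$
and here the defining relation gives $\tau_{\rho_t(y)}(\rho_y(t))=\tau_y(t)^{-1}$, so the group elements telescope and the $X$-coordinate collapses back to $x$, yielding $(x,s)$ as required.

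For axiom (3) I expand both sides. On one side $\rho(x,s)\cdot(y,t)=\big(\tau_y(t)\rho_s(x),\ \rho_x(s)\big)$; on the other $\rho\big((x,s)\cdot(y,t)\big)=\rho\big(\tau_y(t)x,s\big)=\big(\rho_s(\tau_y(t)x),\ \rho_{\tau_y(t)x}(s)\big)$, so what is needed is that $\rho$ be compatible with the $G$-action, $\rho_s(gx)=g\,\rho_s(x)$ and $\rho_{gx}(s)=\rho_x(s)$. Applying $\tau$ and using axet axiom (3) one checks that both candidate values, $(\tau_y(t)\rho_s(x),\rho_x(s))$ and $\rho(\tau_y(t)x,s)$, have $\tau$-image equal to $\tau_{\tau_y(t)x}(s)^{-1}$, so they agree provided $\rho$ has been chosen $G$-equivariantly — which for the canonical choice $\rho(x,s)=(x,s^{-1})$ is immediate.

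I expect axiom (3) to be the only genuine obstacle: axioms (1) and (2) are forced by the defining relation together with the telescoping of the $\tau$-values, but axiom (3) requires $\rho$ to intertwine the $G$-action on $X$ and to be constant on $G$-orbits in the $S$-coordinate, which is not automatic for an arbitrary solution of the defining relation. The clean way to close the argument is to exhibit the canonical good involution $\rho(x,s)=(x,s^{-1})$, for which all three axioms reduce to $\tau_x(s^{-1})=\tau_x(s)^{-1}$ and $\tau_x(e)=e$, and then to observe that the hypothesis that $\im\tau$ be a subgroup of $G$ is precisely what guarantees the existence of other, equally admissible $G$-equivariant choices of $\rho$.
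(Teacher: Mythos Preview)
Your computational core matches the paper's proof closely. For axiom (2) you perform exactly the same telescoping computation the paper does. For axiom (3) you compute, as the paper does, that both $\rho(u)\cdot v$ and $\rho(u\cdot v)$ have the same image under $\tau$, namely $\tau_{\tau_y(t)x}(s)^{-1}$, using the conjugation axiom of the axet. The paper stops at that point and declares axiom (3) verified; you correctly observe that equality of $\tau$-images does not by itself force equality in $X\times S$ unless $\rho$ is chosen coherently (your $G$-equivariance condition). The same issue arises for axiom (1): the paper simply writes ``Naturally, $\rho$ is an involution'', while you note that the defining relation only pins down $\rho^2(x,s)$ up to its $\tau$-value.

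The subtlety you raise is genuine, and the paper's proof glosses over it. But your resolution — retreating to the canonical choice $\rho(x,s)=(x,s^{-1})$ and then remarking that the $\im\tau$ hypothesis allows ``other, equally admissible $G$-equivariant choices'' — does not prove the proposition as stated. The proposition asserts that \emph{any} map $\rho$ satisfying $\tau_{\rho_s(x)}(\rho_x(s))=\tau_x(s)^{-1}$ is a good involution; you have instead shown that the canonical (or any $G$-equivariant) such map is, and correctly flagged that the general claim seems to need an extra hypothesis. So your argument is not wrong, but it proves a somewhat weaker (and arguably the correct) statement; the paper's own proof shares the same gap but does not acknowledge it.
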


\begin{proof}
	To simplify the notation we shall omit lower indices of $\rho$. Naturally, $\rho$ is an involution, so we need to check the two conditions of an involution being good. Let $u=(x,s), v=(y,s')$.
	
	1. $(u\cdot v)\cdot \rho(v)=((x\circ_{s'}y)\circ_{\rho(s')}\rho(y),s))=(\tau_{\rho(y)}(\rho(s'))\tau_y(s')x,s)=(x,s)=u$.
	
	2. Let us consider the left-hand and the right-hand sides of the equality we are checking. Left-hand side: $$\rho(u)\cdot v = (\rho(x)\circ_{s'}y,\rho(s)=(\tau_y(s')\rho(x),\rho(s)).$$ Right-hand side: $$\rho(u\cdot v)=\rho(x\circ_{s'}y,s)=(\rho(\tau_y(s')x),\rho(s))=\rho(\tau_y(s')x,s).$$ So we need to check whether it is true that $(\tau_{\tau_y(s')x}(s))^{-1}=\tau_{\tau_y(s')\rho(x)}(\rho(s))$. Indeed, the left-hand side of this equality is $$(\tau_{\tau_y(s')x}(s))^{-1}=\tau_y(s')(\tau_x(s))^{-1}(\tau_y(s'))^{-1},$$ and the right-hand side is $$\tau_{\tau_y(s')\rho(x)}(\rho(s))=\tau_y(s')\tau_{\rho(x)}(\rho(s))(\tau_y(s'))^{-1}=\tau_y(s')(\tau_x(s))^{-1}(\tau_y(s'))^{-1}$$ since $\tau_{\rho(x)}(\rho(s))=(\tau_x(s))^{-1}$ by definition of $\rho$.
	
	Therefore, $\rho$ is a good involution on $(X\times S,\cdot)$.
\end{proof}

To get a trivalent-compatible $(X,G,\{*_g\},f,\otimes,\oplus)$-system we need not just any good involution: we need to leave the first argument unchanged. It may easily be checked that if the axet is such that for any $x\in X$ the element $\tau_x(e)\in G$, where $e$ is the neutral element of $S$, acts trivially on $X$, then the mapping $\rho\colon (x,s)\mapsto (x,s^{-1})$ is a good involution. Then a simple check proves the following theorem:

\begin{theorem}
	Let $(X,G,\tau)$ be an $S$-axet for a commutative group $S$, and let $\tau_x(e)$ act trivially on $X$ for all $x\in X$. Define $f, \otimes$ as above, and let $$\rho\colon X\times S\to X\times S, \;\; \rho\colon (x,s)\mapsto (x,s^{-1}),$$ $$\oplus\colon S\times S\to S, \;\; \oplus\colon (s,s')\mapsto s's.$$ Then this axet is a trivalent-compatible $(X,G,\{*_g\},f,\otimes,\oplus)$-system with associative composition.
\end{theorem}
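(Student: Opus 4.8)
The plan is to peel off the structural layers one at a time, verifying in turn the axioms of Definitions~\ref{def:ab-system}, \ref{abg-system+}, \ref{def:tr-comp} and \ref{def:assoc}, together with the requirement that $\rho$ be a good involution on the associated quandle. Much of this is already available: the preceding proposition shows that $(X,G,\{\circ_s\})$ with $f(s,s')=s'$ and $s\otimes s'=s$ is an $(f,\otimes)$-system, and since $s\otimes s'=s$ makes $(S,\otimes)$ the trivial quandle (idempotent, right-invertible, self-distributive), the theorem that $(X\times G,\cdot)$ is a quandle whenever $(G,\otimes)$ is applies and gives that $(X\times S,\cdot)$ is a quandle. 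So three things remain to be checked: the $\oplus$-axiom, trivalent-compatibility together with its good involution, and associativity of $\oplus$.

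First I would dispatch the two purely algebraic axioms, which involve only the group structure of $S$. For Definition~\ref{abg-system+}, using that $t\mapsto\tau_y(t)$ is a homomorphism $S\to G$, one has $(x*_g y)*_h y=\tau_y(h)(\tau_y(g)x)=\tau_y(hg)x=\tau_y(g\oplus h)x=x*_{g\oplus h}y$, since $g\oplus h=hg$. For Definition~\ref{def:assoc}, $(g\oplus h)\oplus q=q(hg)=(qh)g=g\oplus(h\oplus q)$, i.e. associativity of $\oplus$ is just associativity of $S$. Neither of these uses commutativity.

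Next I would check directly that $\rho\colon(x,s)\mapsto(x,s^{-1})$ is a good involution on $(X\times S,\cdot)$; I prefer a direct check to invoking the earlier proposition, which assumed $\im\tau$ to be a subgroup of $G$. The key fact is $\tau_x(e)=e_G$ (forced by $\tau_x(e)^2=\tau_x(e)$ in the group $G$, which is also why the hypothesis that ``$\tau_x(e)$ acts trivially'' holds), whence $\tau_x(s^{-1})=\tau_x(s)^{-1}$. Then $\rho^2=\id$ is clear; with $u=(x,s)$, $v=(y,t)$, condition (2) of Definition~\ref{def:good_inv} reads $(u\cdot v)\cdot\rho(v)=(\tau_y(t)x,s)\cdot(y,t^{-1})=(\tau_y(t^{-1})\tau_y(t)x,s)=(\tau_y(e)x,s)=u$, and condition (3) reads $\rho(u)\cdot v=(\tau_y(t)x,s^{-1})=\rho(\tau_y(t)x,s)=\rho(u\cdot v)$. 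The same three lines with the trivial operation in place of $\circ$ show $s\mapsto s^{-1}$ is good on $(S,\otimes)$, and the auxiliary identity required in the remark after Definition~\ref{def:tr-comp} is exactly $\tau_y(g^{-1})\tau_y(g)x=x$.

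Finally I would run through the six conditions of Definition~\ref{def:tr-comp} with $f(g,h)=h$ (so $f(h)=h$), $g\otimes h=g$, $g\oplus h=hg$, $\rho_x(g)=g^{-1}$. Conditions (2)--(5) are immediate from these rigid formulas: $f$ manifestly depends only on its second argument; $g\otimes(h\oplus q)=g=(g\otimes h)\otimes q$; $f(h\oplus q)=h\oplus q=f(h)\oplus f(q)$; $(u\oplus v)\otimes g=u\oplus v=(u\otimes g)\oplus(v\otimes g)$. Condition (6) is the short computation $h\oplus\rho_x(g\oplus h)=h\oplus(hg)^{-1}=(g^{-1}h^{-1})h=g^{-1}=\rho_x(g)$ and $\rho_x(g\oplus h)\oplus g=g(hg)^{-1}=h^{-1}=\rho_x(h)$. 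The single condition where a hypothesis on $S$ is genuinely used is (1): $h\oplus(g\otimes h)=h\oplus g=gh$ must equal $g\oplus h=hg$, which is precisely the commutativity of $S$. I therefore expect the ``main obstacle'' to be only this: condition (1) is the one place where the hypotheses are actually needed, and the bookkeeping point to be careful about throughout conditions (1) and (6) is keeping the reversed product $g\oplus h=hg$ straight. Assembling these verifications with the structural facts recalled above yields that the axet is a trivalent-compatible $(X,G,\{*_g\},f,\otimes,\oplus)$-system with associative composition, as claimed.
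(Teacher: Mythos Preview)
Your proof is correct and is precisely the ``simple check'' the paper alludes to without spelling out; the paper gives no argument beyond that phrase, so your layer-by-layer verification of Definitions~\ref{def:ab-system}, \ref{abg-system+}, \ref{def:tr-comp} and \ref{def:assoc} is exactly what was intended. Your parenthetical observation that $\tau_x(e)=e_G$ is forced by idempotency in $G$ (hence the hypothesis that $\tau_x(e)$ act trivially is automatic) is a small sharpening the paper does not mention.
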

 
Note that this is just one example; in particular, there may exist other involutions which are good on the associated quandle of the system. 
%
%
%
%

\subsection{Fundamental associated quandle}

Just like in case of quandle colourings of knot diagrams, we can look at the invariant defined in the previous subsection in a different way. Namely, Let $D$ be an oriented diagram of a spatial trivalent graph (or a handlebody-link). We can define a (free) associated quandle $\mathcal{Q}_D = (X_D \times G_D, \cdot)$.  This quandle is generated by pairs $(x, g)$ for each arc of $D$ and is defined by the relations which one can write for all vertices and crossings as depicted in Fig~\ref{fig:color_rules_general}. Further, we factorize the quandle by relations 1--6 from Definition~\ref{def:tr-comp}. In case of handlebody-links we additionally factorize the quandle by the associativity condition from Definition~\ref{def:assoc}. As we see, these relations are the same as the ones which defined the proper colouring of a diagram. Let us call the resulting quandle the {\em fundamental (associated) quandle} of the graph (link).

In the same manner as for the proof of the colouring invariant theorem, we can prove that if a diagram $D'$ be obtained from $D$ by performing one of the moves on spatial trivalent graph diagrams (handlebody-link diagrams, respectively) with orientation which agrees with orientation of $D$ on the corresponding arcs, then the quandle $\mathcal{Q}_{D}$ goes to the isomorphic quandle $\mathcal{Q}_{D'}$, which corresponds to $D'$. Therefore, fundamental (associated) quandle is an invariant of spatial trivalent graphs (respectively, handlebody-links).

The colouring invariant defined in the previous section is then naturally interpreted as the number of quandle homomorphisms from the fundamental quandle $\mathcal{Q}_D$ into the given associated quandle $Q$.

%

 
\begin{question}
We have constructed an invariant of handlebody-links. If a handlebody-link does not have trivalent vertices, we get an invariant of classical links. What can be said on this invariant? In particular, since this invariant is a quandle, what is its relation with the fundamental quandle of the classical link?
\end{question}

\bigskip


\section{Invariants of spatial graphs and handlebody-links} \label{sec:graphs}

Consider spatial graphs in $\mathbb{R}^3$ up to ambient isotopy. They may be studied via their diagrams, which are  general position projections onto a plane $\mathbb{R}^2\subset \mathbb{R}^3$. It is known \cite{Kauf, Y} that two diagrams represent one and the same spatial graph if and only if they may be related by a sequence of moves shown in Fig.~\ref{fig:spat_moves} and their modifications obtained by changing all undercrossings to overcrossings and vice-versa. A representation of spatial graphs by braid and plane graphs can be found in \cite{BK}. 

\begin{remark}
As in case of knots, two spatial graphs are isotopic if and only if their diagrams are equivalent modulo a set of Reidemeister moves. In fact, there are two
slightly different notions of isotopy that are commonly considered:  {\em rigid (or flat) vertex isotopy}, in which the cyclic order of the edges at each vertex is fixed, and {\em pliable vertex isotopy}, or simply {\em isotopy}, in which the order of the edges around a vertex may change.
\end{remark}

\begin{figure}
\centering\includegraphics[width=350pt]{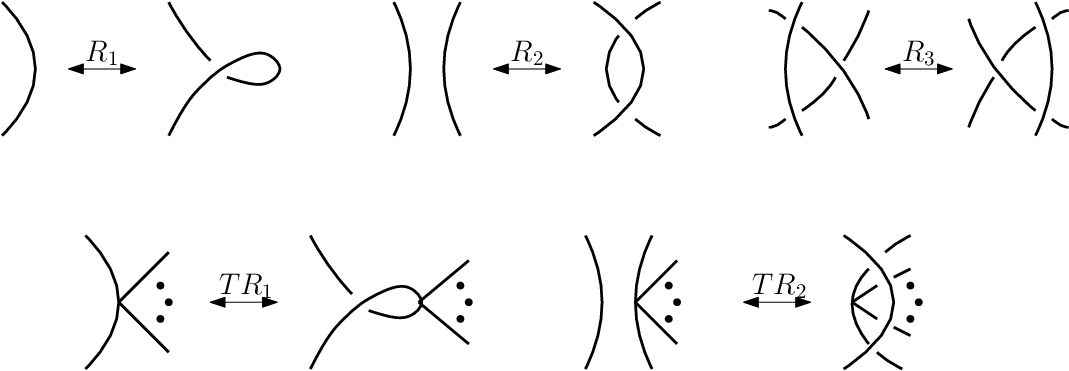}
\caption{Moves on spatial graph diagrams}\label{fig:spat_moves}
\end{figure}

In the following subsections we recall several simple invariants for spatial graphs (see, for example, \cite{M}) and handlebody-links. We also introduce a new invariant, based on $(f,\otimes)$-systems.

\subsection{Groups of spatial graphs} \label{ex:top_man_graph}

It is an interesting fact that we can not prove that man with linked fingers (MLF) is not equivalent to man with unlinked fingers (MUL) using the fundamental group of the complement  the spatial graph in $\mathbb{S}^3$. Indeed, the group of the spatial graph of the man with linked fingers (see Figure~\ref{fig:topol_man}a) is equal to
$$
H_1 = \langle a, b, c, d, f ~|~ac = c d,~~cd = d f,~~a^{-1} d b^{-1} = b f c^{-1} = 1 \rangle. 
$$
If we remove generators $d = a b$ and $c =  b f$ from last two relations, we get 
$$
H_1 = \langle a, b, f  ~|~[b f, a] b = 1 \rangle.
$$
Let us show that  this group is  isomorphic to free group of rank 2. To do it, we rewrite the defining relation in the form
$$
[g, a] b = 1,
$$
whee $g = bf$. Hence, we can remove $b$ and defining relation, and we get free group of rank 2.

On the other hand, the spatial graph corresponding to man with unlinked fingers is depicted in Figure~\ref{fig:topol_man}b. The group of this spatial graph  is isomorphic  to
$$
H_2 = \langle a, b, c ~|~a b^{-1} a^{-1}   =  c^{-1} b c  = 1 \rangle =  \langle a, c \rangle = F_2.
$$

Hence, the fundamental group of the complement  the spatial graph which corresponds to man with linked fingers is isomorphic to the complement  the spatial graph which corresponds to the man with unlinked fingers. 

\begin{proposition}
The fundamental group of the complement of a spatial graph in 3-sphere is isomorphic to the complement of the corresponding handlebody-link  in 3-sphere.
\end{proposition}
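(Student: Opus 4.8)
The plan is to exhibit the complement of the spatial graph as a deformation retract of the complement of the associated handlebody-link, so that the desired isomorphism of fundamental groups (the proposition is stated in abbreviated form; one should read ``isomorphic to \emph{the fundamental group of} the complement of the corresponding handlebody-link'') is simply induced by inclusion. First I would fix notation: let $\Gamma\subset\mathbb{S}^3$ be a spatial graph and let $H=N(\Gamma)$ be a closed regular neighbourhood of $\Gamma$ in $\mathbb{S}^3$, taken in the PL (or smooth) category. By construction $H$ is precisely the handlebody-link representing $\Gamma$: whatever the valencies of the vertices of $\Gamma$, a regular neighbourhood of each connected component $\Gamma_i$ is a handlebody of genus $b_1(\Gamma_i)$, and these handlebodies are pairwise disjoint. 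Uniqueness of regular neighbourhoods up to ambient isotopy shows that $H$, and hence $\pi_1(\mathbb{S}^3\setminus H)$, is well defined independently of the choices made.

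Next comes the only geometric input. It is a standard property of regular neighbourhoods that $H$ deformation retracts onto $\Gamma$ and, more precisely, that $H\setminus\Gamma$ is a half-open collar of the frontier, i.e.\ homeomorphic to $\partial H\times[0,1)$. Writing $\mathbb{S}^3\setminus\Gamma=(\mathbb{S}^3\setminus\operatorname{int}H)\cup(H\setminus\Gamma)$ and pushing the collar $H\setminus\Gamma$ outward onto $\partial H$, we obtain a deformation retraction of $\mathbb{S}^3\setminus\Gamma$ onto $\mathbb{S}^3\setminus\operatorname{int}H$. In particular the inclusion $\mathbb{S}^3\setminus\operatorname{int}H\hookrightarrow\mathbb{S}^3\setminus\Gamma$ is a homotopy equivalence and therefore induces an isomorphism $\pi_1(\mathbb{S}^3\setminus\operatorname{int}H)\cong\pi_1(\mathbb{S}^3\setminus\Gamma)$.

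Finally I would tidy up the bookkeeping: the complement of the handlebody-link may be taken as the open set $\mathbb{S}^3\setminus H$ or as the compact manifold $\mathbb{S}^3\setminus\operatorname{int}H$, and these are homotopy equivalent via another application of the collar $\partial H\times[0,1)$; moreover replacing $\mathbb{S}^3$ by $\mathbb{R}^3$ changes nothing, since $H$ is compact and misses a point. Combining these identifications yields $\pi_1(\mathbb{S}^3\setminus\Gamma)\cong\pi_1(\mathbb{S}^3\setminus H)$, as claimed.

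The only substantive point — hence the main, though rather mild, obstacle — is the assertion that $H\setminus\Gamma$ is a collar on $\partial H$; this rests on the structure theory of regular neighbourhoods of a (generally non-manifold) polyhedron, not on anything special to dimension $3$, and everything after it is formal. An alternative route that avoids citing regular-neighbourhood theory is to compare Wirtinger-type presentations read off from a common diagram of $\Gamma$ and of $H$: the generators are the arcs, the crossing relations are identical, and the vertex relations coincide with the relations appearing in Figure~\ref{fig:color_rules_general} for the $G$-element in the case where $X$ is a one-point set; but the deformation-retract argument is shorter and conceptually cleaner, so I would present that one.
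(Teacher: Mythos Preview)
The paper states this proposition without proof. Your deformation-retract argument is correct and is the standard way to see this: the mapping-cylinder structure of a regular neighbourhood gives $H\setminus\Gamma\cong\partial H\times[0,1)$, so the inclusion $\mathbb{S}^3\setminus\operatorname{int}H\hookrightarrow\mathbb{S}^3\setminus\Gamma$ is a homotopy equivalence and induces the desired isomorphism on $\pi_1$. Your alternative Wirtinger-presentation route is also appropriate here; indeed, the paper's surrounding computations for the ``topological man'' examples are exactly of this type and implicitly use the identification you prove.
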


\begin{remark}
The spatial graph of MLF one can get by gluing an arc to Hopf link. The group of the Hopf link is isomorphic to the free abelian group $\mathbb{Z}^2$ of rank 2. On the other side, the spatial graph of MUF one can get by gluing an arc to trivial 2-component link.  The group of trivial 2-component link is isomorphic to the free  group $F_2$.
\end{remark}


\subsection{Handlebody-links and spatial graphs}
The theory of handlebody-links is different from the theory of spatial graphs. We present two example on that matter.

The first example was suggested by Kostya Storozhuk.
\begin{example}
Two handlebodies in Fig.~\ref{fig:athlete} (``Happy athlete'' and ``Unhappy athlete'') are equivalent, but they are not equivalent as spatial graphs. 

\begin{figure}
\centering\includegraphics[width=250pt]{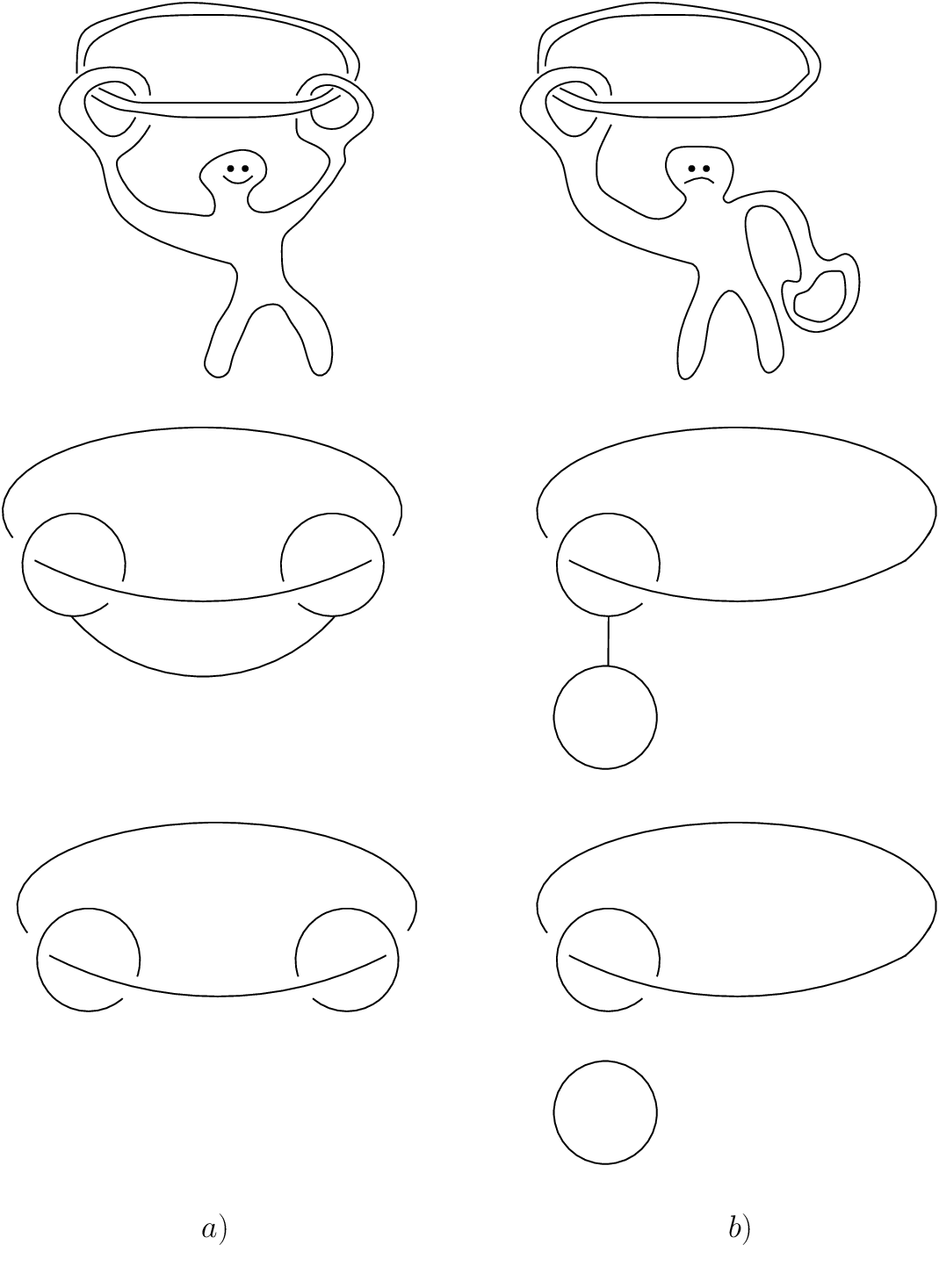}
\caption{``Happy athlete'' and ``unhappy athlete'' together with their graph representations and corresponding $L(\Gamma)$ invariants}\label{fig:athlete}
\end{figure}

\end{example}

The following invariant of spatial graphs was suggested by Kauffman in 1989 \cite{Kauf}. Let $\Gamma$ be a spatial graph. We can associate a collection $L(\Gamma)$ of knots and links by the rule: at each vertex choose two edges and delete the others. The set of all such links (with multiplicity for the same link may appear several times), for all choices of two edges at each vertex, is $L(\Gamma)$. It is an invariant of $\Gamma$. 

Let us prove using this invariant that as a spatial graph Athlete cannot unlink one hand. Indeed, Fig.~\ref{fig:athlete} shows graph-representations of the athletes (middle row) and the corresponding values of Kauffman invariant (bottom row; in both cases there is only one link in the collection $L(\Gamma)$. The links in the bottom row are evidently not equivalent, and therefore the initial graphs are not equivalent as well.

At the same time, these two handlebody-links are equivalent. To see that one can take the ``sole'' of one of the Happy athlete's fingers, and move it along its body, then the other pair of fingers, and back along the body. That will unlink the considered pair of fingers from the hoop it holds, making it unhappy indeed. That shows that the categories of handlebody-links and trivalent graphs are very different. It is an interesting exercise to present a sequence of moves on handlebody-link diagrams which constitute the described unlinking.

\begin{question}
Is it possible to modify the invariant $L(\Gamma)$ to get an invariant of handlebody-links?
\end{question}

%

Another example is related to the ``topological man'':

\begin{example}
The topological man with linked fingers  can unlink his fingers in the category of handlebody-links, but he cannot unlink fingers in the category of spatial graphs.
\end{example}

To prove it, we once again use the Kauffman invariant. Indeed, Fig.~\ref{fig:GRL} (top row) shows trivalent graphs of topological man (with watch) with a) linked fingers and b) unlinked fingers.  Bottom row depicts the corresponding collections $L(\Gamma)$ which consist of only one link in both cases. These links are evidently not equivalent. The same stays true for topological man without watch: in that case remove the circle component disjoint from the trivalent graph from all figures; the resulting links remain not equivalent.

\begin{figure} 
\centering\includegraphics[width=250pt]{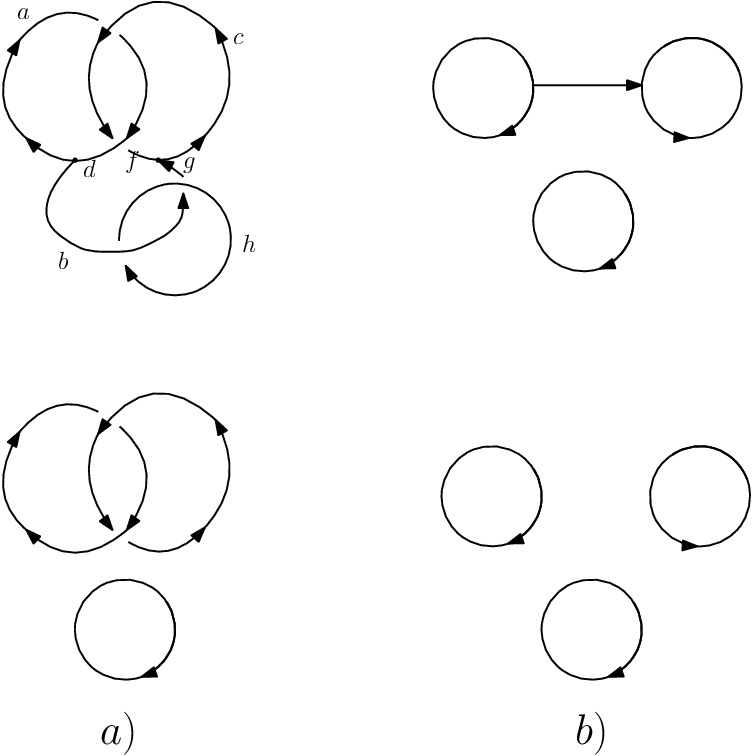} 
\caption{Spatial graphs corresponding to topological man (with watch) with a) linked fingers and b) unlinked fingers. Bottom row: corresponding links $L(\Gamma)$}\label{fig:GRL}
\end{figure}

\subsection{Topological man with watch}
In this subsection we are  considering topological man with watch and linked fingers (MWF) and prove that it is not equivalent to disjoint union of  unknotted handlebody of genus 2 and unknotted handlebody of genus 1 (topological man with watch and unlinked fingers, MWUF). 

At first, let us  find the fundamental group of the complement of the man with watch and linked fingers, $G_{WLF}$. Using 
Fig.~\ref{fig:GRL}(a, top row), we see that this group has a generating set
$$
a, b, c, d, f, g, h.
$$
In two vertices we have two relations
$$
d = a b,~~c = g f. 
$$

Further, consider relations in crossings. In the upper crossing (crossing 1) we have relation
$ac = cd.$ In crossing 2 we have relation $c d = d f.$ In crossing 3 we have relation $b h  = h g.$ In crossing 4 we have relation $h b = b f.$
Hence,
$$
G_{WLF} = \langle  a, b, c, d, f, g, h~|~ d = a b,~~c = g f,~~a c = c d,~~cd = d f,~~b f = h g,~~h b = b h \rangle.
$$
If we remove $d$ and $c$, using the first and the second relations, we get
$$
G_{WLF} = \langle  a, b,  f, g, h~|~ a  g f = g f a b,~~g f a b = a b  f,~~b h = h g,~~h b = b h \rangle.
$$
From the third relation, $g = h^{-1} b h$, and by the last, relation, $g = b$. Removing $g$, we get
$$
G_{WLF} = \langle  a, b,  f,  h~|~ a  b f = b f a b \rangle = \langle  h \rangle * \langle  a, b,  f~|~ a  b f = b f a b \rangle.
$$
We can rewrite the defining relation in the form $[bf, a] b = 1$. If we put $t = b f$, then
$$
\langle  h \rangle * \langle  a, b,  f~|~[bf, a] b = 1 \rangle = \langle  h \rangle * \langle  a, b,  t~|~ [t, a] b = 1 \rangle \cong F_3.
$$
We proved that $G_{WLF} \cong F_3$. On the other hand, it is easy to see that the fundamental group of the complement of the man with unlinked fingers and with watch is isomorphic to $F_3$ too.


Now let us construct quandle $Q_1$, which corresponds to MWF. Take a set and the group: 
$$
X_1 = \{ x, y, z\},~~G_1 = \langle  a, b, c, d, f, g, h \rangle
$$
Elements of $Q_1$  lie in the set  $X_1 \times G$ and this quandle is generated by elements:
$$
(x, a),~~(x, b),~~(y, c),~~(x, d),~~(y, f),~~(y, g),~~(z, h).
$$  
 In two vertices we have two relations
$$
d = a b,~~~ c = g f.
$$
 Further, consider relations in crossings. In the first crossing  we have relation
$$
(x, a) \cdot (y, c) = (x, d) \Leftrightarrow (x *_c y, c^{-1} a c) = (x, d),
$$
which is equivalent to the system
$$
\begin{cases}
x *_c y = x, & \\
c^{-1} a c = d. & 
\end{cases}
$$
In the second crossing we have relation
$$
(y, c) \cdot (x,d) = (y, f) \Leftrightarrow (y *_d x, d^{-1}  c d) = (y, f),
$$
which is equivalent to the system
$$
\begin{cases}
y *_d x = y, & \\
d^{-1}  c d = f. & 
\end{cases}
$$
In the third crossing we have relation
$$
(x, b) \cdot (z, h) = (y, g) \Leftrightarrow (x *_h z, h^{-1} b h) = (y, g),
$$
which is equivalent to the system
$$
\begin{cases}
x *_h z = y, & \\
h^{-1} b h = g. & 
\end{cases}
$$
In the forth crossing we have relation
$$
(z, h) \cdot (x, b) = (z, h) \Leftrightarrow (z *_b x, b^{-1} h b) = (z, h),
$$
which is equivalent to the system
$$
\begin{cases}
z *_b x = z, & \\
b^{-1} h b = h. & 
\end{cases}
$$

As was seen above,  $G_1 \cong F_3$ and we can present generators of $G_1$ as words in the generators $a$, $h$, $t$. We have
$$
b = g = [a, t],~~f = [t, a] t,~~d = a[a, t],~~c = t.
$$
Hence, we have proven

\begin{lemma} $Q_1$ is generated by elements
$$
(x, a),~~(x, [a, t]),~~(y, t),~~(x, a[a, t]),~~(y, [t, a] t),~~(y, [a, t]),~~(z, h),
$$
and is defined by relations
$$
x *_c y = x,~~y *_d x = y,~~x *_h z = y,~~z *_b x = z.
$$
\end{lemma}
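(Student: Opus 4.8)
The plan is to collect the computations already performed in this subsection: the reduction of the group $G_{WLF}$ to $F_{3}$ on the one hand, and the crossing-by-crossing analysis of the diagram of MWF on the other.

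First I would recall that, by construction, $Q_{1}$ is the quandle on $X_{1}\times G_{1}$ generated by the seven pairs $(x,a),(x,b),(y,c),(x,d),(y,f),(y,g),(z,h)$ — one for each arc of the diagram in Fig.~\ref{fig:GRL}(a) — subject to the two vertex relations $d=ab$, $c=gf$ and to the four crossing relations. As observed above, each crossing relation $(u,p)\cdot(v,q)=(w,r)$ splits into the $X$-component $u*_{q}v=w$ and the $G$-component $q^{-1}pq=r$. Explicitly, the four crossings contribute the $X$-relations $x*_{c}y=x$, $y*_{d}x=y$, $x*_{h}z=y$, $z*_{b}x=z$ together with the $G$-relations $c^{-1}ac=d$, $d^{-1}cd=f$, $h^{-1}bh=g$, $b^{-1}hb=h$.

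Next I would observe that the four $G$-relations, together with the two vertex relations $d=ab$ and $c=gf$, are precisely the defining relations of the group $G_{WLF}$ computed above. Hence $G_{1}\cong G_{WLF}\cong F_{3}=\langle a,h,t\rangle$ with $t=bf$, and in this free group one has the explicit expressions $b=g=[a,t]$, $f=[t,a]t$, $d=a[a,t]$, $c=t$. Substituting these into the generator list turns it into $(x,a),(x,[a,t]),(y,t),(x,a[a,t]),(y,[t,a]t),(y,[a,t]),(z,h)$, which is the list in the statement. Since the $G$-component relations and the vertex relations have now been entirely absorbed into the identification $G_{1}=F_{3}$, the only relations left describing $Q_{1}$ are the $X$-components of the four crossing relations, namely $x*_{c}y=x$, $y*_{d}x=y$, $x*_{h}z=y$, $z*_{b}x=z$; this is exactly the asserted presentation, with $c,d,b$ now understood as the words $t$, $a[a,t]$, $[a,t]$.

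The step requiring the most care — though it is routine bookkeeping rather than anything deep — is matching the quandle crossing relations, whose $G$-parts are the conjugations $q^{-1}pq=r$, with the crossing relations used earlier for $G_{WLF}$, which appear there in the rearranged form $pq=qr$, and then checking that the substitutions $b=[a,t]$, $c=t$, $d=a[a,t]$, $f=[t,a]t$, $g=[a,t]$ are mutually consistent (for instance that $c=gf=[a,t][t,a]t=t$ and $f=b^{-1}t=[t,a]t$). Once this verification is in place nothing further is needed, since the reduction of the group part to $F_{3}$ has already been carried out.
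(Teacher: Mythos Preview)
Your proposal is correct and follows essentially the same approach as the paper: the lemma is stated there as a summary (``Hence, we have proven'') of the preceding computations, namely the crossing-by-crossing splitting of each relation into an $X$-component and a $G$-component, the identification of the $G$-component relations with those presenting $G_{WLF}\cong F_3$, and the substitution $b=g=[a,t]$, $c=t$, $d=a[a,t]$, $f=[t,a]t$ into the generator list. Your write-up organizes exactly these steps, so there is nothing to add.
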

 
\medskip 

Let us consider the disjoint union of  unknotted  handlebody of genus 2 and unknot  handlebody of genus 1 (MWUF). We construct a quandle $Q_2$, using Fig.~\ref{fig:GRL}(b, top row). We have 
$$
X_2 = \{ x, y \},~~G_2 = \langle  a, b, c,  h \rangle.
$$
Construct a quandle $Q_2$, which is generated by elements
$$
(x, a),~~(x, b),~~(x, c),~~(y, h) \in X_2 \times G_2.
$$  
In two vertices we have two relations
$$
 a  b = a,~~~b^{-1} c = c.
$$
Hence, $b=1$ and  as we seen before, $G_1 = G_2 = G_{WLF} \cong F_3$. We have 

\begin{lemma}
$Q_2$ is a subquandle of $X_2 \times G_2$, where $G_2 \cong F_3$, and it is generated by  elements
$$
(x, a),~~(x, 1),~~(x, c),~~(y, h).
$$
\end{lemma}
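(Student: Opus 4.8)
The plan is to read off a presentation of $Q_2$ directly from the diagram of MWUF in Fig.~\ref{fig:GRL}(b, top row), simplify the group factor $G_2$ using the two vertex relations, and then identify the stated generating set; the argument parallels the one already carried out for $Q_1$, but is shorter because the MWUF diagram has no crossings. First I would record the combinatorial data. The underlying graph of MWUF is a dumbbell --- two trivalent vertices joined by the ``body'' edge, with one ``hand''-loop at each vertex --- together with a disjoint unknotted circle representing the watch, and this standard diagram is drawn without any crossing. Since $|X_2|=2$, the only quandle structure on $X_2$ is the trivial one, so every operation $*_g$, $g\in G_2$, is trivial ($u*_g v=u$); a direct check (cf.\ the Example following Definition~\ref{GF}) shows that $(X_2,\{*_g\})$ is then a $G_2$-family of quandles, whose associated quandle is exactly $X_2\times G_2$ with the operation $(u,p)\cdot(v,q)=(u*_q v,\,q^{-1}pq)=(u,\,q^{-1}pq)$; in particular $X_2\times G_2$ is a genuine quandle. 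The two trivalent vertices of the dumbbell force the three edges of the man's graph to carry the same $X_2$-element, which we name $x$, while the disjoint circle carries the element $y$; hence $Q_2$ is, by construction, the subquandle of $X_2\times G_2$ generated by $(x,a),(x,b),(x,c),(y,h)$, and its defining relations are precisely the vertex relations $ab=a$ and $b^{-1}c=c$, which live entirely in the group factor $G_2=\langle a,b,c,h\rangle$.

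Next I would simplify $G_2$. From $ab=a$ we obtain $b=1$, after which the second vertex relation $b^{-1}c=c$ holds automatically; the generator $h$ comes from the split unknotted circle and is unconstrained, so $G_2=\langle a,c,h\rangle\cong F_3$, recovering once more $G_2\cong G_1\cong G_{WLF}$. Substituting $b=1$ turns the generator $(x,b)$ into $(x,1)$, and therefore $Q_2$ is the subquandle of $X_2\times G_2$, with $G_2\cong F_3$, generated by $(x,a),(x,1),(x,c),(y,h)$, as asserted.

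The only point that deserves care is the claim that the MWUF diagram produces no crossing relations --- this is what makes $Q_2$ so small, and it is precisely the feature that distinguishes it from $Q_1$, whose diagram Fig.~\ref{fig:GRL}(a) carries four crossings and hence the four extra relations $x*_c y=x$, $y*_d x=y$, $x*_h z=y$, $z*_b x=z$. Once MWUF is drawn in its evident crossingless form (two unknotted, unlinked loops joined by an arc, together with a split unknot) this is immediate; hence I expect no difficulty beyond carefully fixing the diagram and the orientations of its edges.
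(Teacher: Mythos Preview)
Your proposal is correct and follows essentially the same route as the paper: read the generators off the MWUF diagram, observe that only the two vertex relations $ab=a$ and $b^{-1}c=c$ arise (no crossing relations), deduce $b=1$ and hence $G_2=\langle a,c,h\rangle\cong F_3$, and rewrite the generator $(x,b)$ as $(x,1)$. The paper's argument is exactly this computation, stated tersely in the paragraph immediately preceding the lemma; your additional remarks --- that the crossingless form of MWUF is what eliminates the extra relations present for $Q_1$, and the justification of why $X_2\times G_2$ carries an associated-quandle structure --- are elaborations the paper omits, but the underlying argument is the same.
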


Let us find  colourings which we can construct using 3-element set $X = \{ a_0, a_1, a_2\}$ and cyclic group of order 2. At first, consider the epimorphism $G_2 \to \mathbb{Z}_2 = \langle \bar{h} \rangle$, which is defined by the rules
$$
a \mapsto 1,~~c \mapsto 1,~~h \mapsto \bar{h}.
$$ 
We can define $\mathbb{Z}_2$--family of quandles if we let $\R_3 = (X, *_1)$ to be the 3--element dihedral quandle, and let $\T_3 = (X, *_0)$ to be the 3-element trivial quandle. We have associated quandle $Q$ on the set $X \times \mathbb{Z}_2$. This quandle contains 6 elements. Define a colouring of MWUF by the rules
$$
(x, a) \mapsto (a_0, 1),~~(x, 1)\mapsto (a_0, 1),~~(x, c)\mapsto (a_0, 1),~~(y, h) \mapsto (a_1, \bar{h}).
$$
If we find a quandle, which is generated by these elements, we get 3-element quandle with multiplication
$$
\alpha \alpha = \alpha,~~\alpha \beta = \gamma,~~\alpha  \gamma = \alpha,
$$
$$
\beta \alpha = \beta,~~\beta \beta = \beta,~~\beta \gamma = \beta,
$$
$$
\gamma \alpha = \gamma,~~\gamma \beta = \alpha,~~\gamma \gamma = \gamma,
$$
where
$$
\alpha = (a_0, 1),~~ \beta =  (a_1, \bar{h}),~~\gamma = (a_2,  \bar{h}).
$$
Hence, MWUF has a colouring  by this 3-element quandle which is a subquandle of $Q$.

Analogously, if we consider the epimorphism $G_2 \to \mathbb{Z}_2 = \langle \bar{h} \rangle$, which is defined by the rules
$$
a \mapsto \bar{h},~~c \mapsto \bar{h},~~h \mapsto \bar{h},
$$ 
we can construct a colouring of MWUF by the associated quandle $Q$. Let us show that this colouring gives quandle $Q$. We have elements
$$
\delta_0 = (a_0, \bar{h}),~~\alpha_0 = (a_0, 1),~~\beta = (a_1, \bar{h}).
$$
Find products of these elements, we get
$$
\delta_0 \beta = (a_1, \bar{h}).
$$
Denote $(a_1, \bar{h})= \delta_2$. Further,
$$
\alpha_0 \delta_2 = (a_1, 1).
$$
Denote $f_1 = (a_1, 1)$.
Find the product
$$
\alpha_0 \beta = (a_2, 1).
$$
Denote $f_2 = (a_2, 1)$. We see that
$$
\{\alpha_0, ~~\alpha_1,~~\alpha_2,~~ \beta, ~~ \delta_0, ~~\delta_2 \} = R_1.
$$
Hence, we constructed a colouring of MWUF by quandle $Q$. To prove that $Q_1$ is not isomorphic to $Q_2$, it is enough to prove that there is no epimorphisms $Q_1 \to Q$
Define a homomorphism $Q_1 \to Q$, sending $X_1 \to \{a_0, a_1, a_2 \}$ by the rules
$$
x \mapsto a_0,~~~y \mapsto a_1,~~~z \mapsto a_2,
$$
and sending $G_1$ to $\mathbb{Z}_2$ be the rules
$$
t \mapsto \bar{h},~~a \mapsto \bar{h},~~h \mapsto \bar{h}.
$$ 
We have the following maps on other generators of $G_1$:
$$
b \mapsto 1,~~g \mapsto 1,~~f \mapsto \bar{h},~~d \mapsto \bar{h},~~c \mapsto \bar{h}.
$$ 
Hence, the generators of $Q_1$ go to the following elements of $Q$:
$$
(a_0, \bar{h}),~~(a_0, 1),~~(a_1, \bar{h}),~~(a_1, 1),~~(a_2, \bar{h}).
$$
Since, we want to construct a quandle homomorphism, relations of $Q_1$ must go to relations of $Q$. Hence, we have the following relations in $Q$:
$$
a_0 *_{\bar{h}} a_1 = a_0,~~a_1 *_{\bar{h}} a_0 = a_1,~~a_0 *_{\bar{h}} a_2 = a_1.
$$
But these are relations in the dihedral quandle $\R_3$. Hence, $a_0 = a_1 = a_2$ and the image of $Q_1$ is the trivial 2-element quandle
$$
(a_0, 1),~~(a_0, \bar{h}).
$$

In the general case, in $Q_1$ we have for relations
$$
x *_c y = x,~~y *_d x = y,~~z *_b x = z,~~x *_h z = y.
$$
To have these relations we must have in $G_1$:
$$
c = d = b = 1,
$$
or
$$
t = 1, ~~d = a =1,~~b = [a, t] = 1.
$$
It means that the homomorphism $G_1 \to  \mathbb{Z}_2$ is defined by the rules
$$
t \mapsto 1,~~a \mapsto 1,~~h \mapsto \bar{h}.
$$ 
In this case, all other generators $b, g, f, d, c$ go to the unit element of $ \mathbb{Z}_2$. Hence, the generator of $Q_1$ go to the following elements of $Q$:
$$
\alpha_0 = (a_0, 1),~~\alpha_1 = (a_1, 1),~~\beta_2 = (a_2, \bar{h}).
$$
Let us find a quandle which is generated by these elements. We have:
$$
\alpha_1 \alpha_0 =  \alpha_1,~~\beta_2 \alpha_0 = \beta_2,
$$
$$
\alpha_0 \alpha_1 =  \alpha_0,~~\beta_2 \alpha_1 = \beta_2,
$$
$$
\alpha_0 \beta_2 =  \alpha_1,~~\alpha_1 \beta_2 = \alpha_0.
$$
Hence, the image of $Q_1$ in $Q_2$ is a 3-element quandle. We have shown that the diagram of MWUF does not have a proper colouring by elements of $Q$.

\begin{theorem}
MWUF has a proper colouring by 6--elements quandle $Q = (\R_3 \times \mathbb{Z}_2, *_0, *_1)$, but MWF does not have such colouring. In particular, MWF is not equivalent to MWUF.
\end{theorem}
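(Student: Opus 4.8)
The plan is to read the whole statement off the presentations of the fundamental associated quandles obtained in the two lemmas above — $Q_1$ for MWF and $Q_2$ for MWUF — together with the fact, established in Section~\ref{sec:handlebody}, that the fundamental associated quandle of a handlebody-link is an invariant; in particular, whether that quandle admits a quandle epimorphism onto a fixed quandle is an invariant. The target $Q=(\R_3\times\mathbb{Z}_2,*_0,*_1)$ is the associated quandle of the $\mathbb{Z}_2$-family of quandles with $(X,*_0)=\T_3$ and $(X,*_1)=\R_3$ on $X=\{a_0,a_1,a_2\}$, whose $\mathbb{Z}_2$-family axioms were checked in the $6$-element Example of Section~\ref{ext}; hence $Q$ is a genuine associated quandle, colourings of a diagram by $Q$ are homomorphisms from its fundamental associated quandle into $Q$, and ``colouring by $Q$'' in the statement is understood to mean a \emph{surjective} one. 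Thus the theorem reduces to two claims: (i) $Q_2$ admits an epimorphism onto $Q$; (ii) $Q_1$ does not.

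For (i) I would exhibit the epimorphism. Using the lemma describing $Q_2$ (the vertex relations force $b=1$ in $G_2$, and $G_2\cong F_3$), combine the group epimorphism $G_2\to\mathbb{Z}_2=\langle\bar h\rangle$ sending $a,c,h\mapsto\bar h$ with the $X$-level map $x\mapsto a_0$, $y\mapsto a_1$. The generators of $Q_2$ then go to $(a_0,\bar h),(a_0,1),(a_0,\bar h),(a_1,\bar h)$, and closing this set of three distinct elements under the associated quandle operation yields all six elements $(a_i,\varepsilon)$, $i\in\{0,1,2\}$, $\varepsilon\in\mathbb{Z}_2$ — this is precisely the short computation already carried out in the excerpt. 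Hence this colouring of MWUF is surjective and (i) holds.

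For (ii) I would argue by contradiction: suppose $\varphi\colon Q_1\to Q$ is an epimorphism. It induces a group homomorphism $\psi\colon G_1\cong F_3=\langle a,t,h\rangle\to\mathbb{Z}_2$ on $\mathbb{Z}_2$-coordinates and a map $\phi\colon\{x,y,z\}\to\{a_0,a_1,a_2\}$ on $X$-coordinates, and the relations of $Q_1$ — $x*_c y=x$, $y*_d x=y$, $z*_b x=z$, $x*_h z=y$ with $b=g=[a,t]$, $c=t$, $d=a[a,t]$ — must hold in $Q$. Two elementary facts organise the analysis: (a) since $\mathbb{Z}_2$ is abelian the $\mathbb{Z}_2$-coordinate is preserved by the associated quandle operation, so the set of $\mathbb{Z}_2$-coordinates occurring in $\im\varphi$ is exactly $\{0,\psi(a),\psi(t),\psi(h)\}$ (the coordinates of the seven generators), and surjectivity forces $\bar h\in\{\psi(a),\psi(t),\psi(h)\}$; (b) in $\R_3$ an equality $a_i*a_j=a_k$ with $k\in\{i,j\}$ forces $i=j$. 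Now split on $\psi(h)$. If $\psi(h)=\bar h$, fact (b) applied to $x*_h z=y$ shows $\phi(x),\phi(y),\phi(z)$ are pairwise distinct (a coincidence would make all three equal, collapsing $\im\varphi$ to at most two elements); then by (b) each of $x*_c y=x$, $y*_d x=y$, $z*_b x=z$ forces its index to be trivial, i.e. $\psi(c)=\psi(d)=\psi(b)=0$, and since $\psi(c)=\psi(t)$, $\psi(d)=\psi(a)$ (commutators die in $\mathbb{Z}_2$) we get $\psi(a)=\psi(t)=0$; but then $\im\varphi$ is generated by three elements with $\mathbb{Z}_2$-coordinates $0,0,\bar h$ over three distinct $X$-values, and a direct check shows these close to a three-element subquandle, not all of $Q$. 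If $\psi(h)=0$, then $x*_h z=y$ reads $\phi(x)*_0\phi(z)=\phi(x)=\phi(y)$, so $\phi(x)=\phi(y)$; by (a) surjectivity now needs $\psi(a)=\bar h$ or $\psi(t)=\bar h$, but every generator whose $\mathbb{Z}_2$-coordinate is $\bar h$ then lies over the $X$-value $\phi(x)$, and, using (a) together with idempotency of each $*_g$, an induction on word length in the generators shows this persists, so $\im\varphi$ contains no $(a_i,\bar h)$ with $a_i\neq\phi(x)$ and cannot be all of $Q$. In every case there is no epimorphism $Q_1\to Q$, which is (ii).

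The step I expect to be the main obstacle is making the case analysis in (ii) genuinely exhaustive: one must track the induced homomorphism $G_1\to\mathbb{Z}_2$, the induced map on $X$-coordinates, and the interplay of the dihedral operation $*_1$ with idempotency all at once. Facts (a) and (b) above are exactly what tame this — (a) pins down which $\mathbb{Z}_2$-coordinates, and via the inductive argument which $X$-coordinates, can occur over $\bar h$ in any generated subquandle, while (b) converts each of the four defining relations of $Q_1$ into a constraint on $\psi$ once the $X$-coordinates are known. Granting (i) and (ii), $Q_1\not\cong Q_2$, and since the fundamental associated quandle is an invariant of handlebody-links, MWF and MWUF are inequivalent, as claimed.
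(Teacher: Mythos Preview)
Your proposal is correct and follows the same overall strategy as the paper: exhibit an explicit epimorphism $Q_2\twoheadrightarrow Q$ via the group map $a,c,h\mapsto\bar h$, and show no epimorphism $Q_1\to Q$ exists. Your part (i) is verbatim the paper's construction.

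For part (ii) your argument is actually more complete than the paper's. The paper fixes from the outset the $X$-level map $x\mapsto a_0,\ y\mapsto a_1,\ z\mapsto a_2$ (a bijection) and then argues that the relations $x*_c y=x$, $y*_d x=y$, $z*_b x=z$ force $\psi(c)=\psi(d)=\psi(b)=0$, hence $\psi(t)=\psi(a)=0$, leaving only the map $t,a\mapsto 1,\ h\mapsto\bar h$, whose image it computes to be the $3$-element subquandle $\{(a_0,0),(a_1,0),(a_2,\bar h)\}$. The paper never explicitly treats non-bijective $X$-maps or the possibility $\psi(h)=0$. Your organisation --- splitting first on $\psi(h)$, then using the dichotomy ``all $\phi$-values equal or all distinct'' forced by the dihedral relation $x*_h z=y$, and in the case $\psi(h)=0$ running the inductive argument that every $\bar h$-coordinate element in the generated subquandle stays over $\phi(x)$ --- closes these gaps and makes the exhaustiveness transparent. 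What the paper's version buys is brevity (it goes straight to the unique candidate bijective colouring); what yours buys is that no case is left implicit.
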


To provide alternative proof that MWF is not equivalent to MWUF we can use the construction proposed by A.~Malyutin. Consider the topological man with watch and unlinked fingers. In that case the watch (being an unlinked circle) can be contracted to a point. At the same time the watch on MWF can't be contracted. Therefore these two handlebody-links are not equivalent.

\subsection{The fundamental quandle of a spatial graph} \label{QuandleSpaGr} Niebrzydowski \cite{N} extended the notion of {\em fundamental quandle} (usually defined for links) to spatial graphs. Given a diagram for a spatial graph $\Gamma$, the fundamental quandle $Q(\Gamma)$ is the quandle whose generators are the arcs of the diagram, and apart from the usual quandle relations 1--3, there are additional Wirtinger-type relations given by each crossing and vertex. Using the arc labelling from in Figure~\ref{fig:spat_fund}, these additional relations are:

\begin{enumerate}
\item[4.] For every crossing we have the relation $a^b = c$, where we use the notation $a^b := a * b$ and $a^{\bar{b}} := a ~\bar{*}~ b$;
\item[5.] For each generator $x$ and every vertex we get the relation $x^{a_1^{\varepsilon_1} a_2^{\varepsilon_2} \ldots \,a_n^{\varepsilon_n}} = x$, where $a_i^{\varepsilon_i} = a_i$ if $\varepsilon_i = +1$, and $a_i^{\varepsilon_i} = \bar{a_i}$ if $\varepsilon_i = -1$; here $\varepsilon_i \in \{\pm 1\}$ and denote the orientation of the corresponding arc of the diagram.
\end{enumerate}
Relation~4 is the same one which comes from crossings in the definition of the fundamental quandle of a knot; relation~5 ensures invariance under Reidemeister moves for spatial graphs, so the quandle is an invariant of pliable vertex isotopy.

Given a quandle $Q$, there is an associated group $As(Q)$, obtained by interpreting the quandle operation as conjugation i.e. $x^y = y^{-1} x y$. For a knot, the associated group of the fundamental quandle is isomorphic to the fundamental group of the knot. However, for spatial graphs these groups are generally not isomorphic (although there is always an epimorphism from $As(Q)$ to the fundamental group of the graph). In particular, while the abelianization of the fundamental group of a knot is always an infinite cyclic group, the abelianization of $As(Q(\Gamma))$ is equal to $\mathbb{Z}^e$, where $e$ is the number of edges in the graph. So two cycles with different numbers of edges will be different, even if their fundamental groups are isomorphic.

\begin{figure}
\centering\includegraphics[width=250pt]{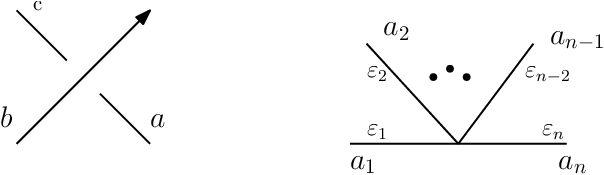}
\caption{Arc labelling for the fundamental quandle relations}\label{fig:spat_fund}
\end{figure}

\subsection{An invariant of spatial graphs coming from $(f,\otimes)$-systems}

Let us fix a natural number $n$. Consider an $(f,\otimes)$-system and let $\rho$ be an involution. Equip the system with a mapping $\Gamma\colon G^{\times n} \to G$.

\begin{definition}
	The $(X,G,\{*_g\},f,\otimes,\Gamma)$-system is called {\em $n$-compatible} if the involution $\rho$ on the associated quandle  is good, and the following conditions are satisfied:
	\begin{enumerate}
		\item $\Gamma$ is a composition function: for all $x,y\in X$, $g_1,\dots,g_n \in G$, $$((x\circ_{g_1}y)\circ_{g_2}\dots )\circ_{g_n} y = x\circ_{\Gamma(g_1,\dots,g_n)} y;$$
		\item for all $g_1,\dots,g_{n-2},h_1,h_2\in G$, $\Gamma(h_1,h_2,g_1,\dots,g_{n-2})=\Gamma(h_1,h_1 \otimes h_2, g_1,\dots, g_{n-2})$;
		\item the mapping $f$ depends only on its second argument: $f(g,h)=:f(h)$;
		\item for all $g_1,\dots,g_n,h\in G$, $h\otimes\Gamma(g_1,\dots,g_n)=((h\otimes g_1)\otimes\dots )\otimes g_n$ (twisted associativity);
		\item for all $g_1,\dots,g_n\in G$, $f(\Gamma(g_1,\dots,g_n))=\Gamma(f(g_1),\dots,f(g_n))$ 
		\item for all $g_1,\dots,g_n,h\in G$, $\Gamma(g_1,\dots,g_n)\otimes h=\Gamma(g_1\otimes h,\dots,g_n\otimes h)$ (distributivity);
		\item for all $x\in X, g_1,\dots,g_n\in G$, let $g_{n+1}=\Gamma(g_1,\dots,g_n)$. Then for any $i=0,\dots,n-1$, $$\rho_x(g_{n-i})=\Gamma(\rho_x(g_{n-i+1}),\dots,\rho_x(g_{n+1}),g_1,\dots,g_{n-i-1}).$$
	\end{enumerate}
\end{definition}

It is easy to see that for $n=2$ we get exactly the trivalent compatibility conditions.

We can also consider a set of operations $\Gamma_i$ for a set of distinct natural numbers $\Lambda=\{n_i\in\mathbb{N}, i=1,\dots,k\}$: $\Gamma_i\colon G^{\times n_i}_i\to G$. If $(f,\otimes,\Gamma_i)$-system is $n_i$-compatible for each $i$, we shall call $(f,\otimes,\Gamma_1,\dots,\Gamma_k)$-system {\em $\Lambda$-compatible}.

In parallel with Theorem~\ref{thm:tri-graph-inv} we get the following theorem, which directly follows from construction:

\begin{theorem} \label{thm:spa-graph-inv}
	Consider a set $\Lambda=\{n_1,\dots,n_k\}$ of distinct natural numbers, a $\Lambda$-compatible system with associated quandle $(\mathcal{A},\cdot)$ and a good involution $\rho$ which acts on the associated quandle in the way $\rho(x,g)=(x,\rho_x(g))$. Then the number of proper colourings with the associated quandle is invariant of spatial graphs with vertices of valencies $n_1,\dots,n_k$, where the colouring is {\em proper} if it satisfies the conditions depicted in Fig.~\ref{fig:proper_n_col}.
\end{theorem}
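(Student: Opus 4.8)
The plan is to follow the proof of Theorem~\ref{thm:tri-graph-inv} verbatim in spirit, checking each generating move on diagrams of spatial graphs with vertex valencies in $\Lambda$ and verifying that it induces a bijection between the sets of proper colourings of the two diagrams, so that the cardinality of this set is preserved. A proper colouring is a map from the arcs of the diagram to the associated quandle $(\mathcal{A},\cdot)$, which by hypothesis is a genuine quandle; hence invariance under the ordinary Reidemeister moves $R_1,R_2,R_3$ is immediate. Since $\rho$ is a good involution acting by $\rho(x,g)=(x,\rho_x(g))$, reversing the orientation of an edge while replacing its colour $c$ by $\rho(c)$ takes proper colourings to proper colourings, exactly as in the discussion of good involutions preceding Definition~\ref{abg-system+}; this lets us orient the relevant piece of the diagram however is convenient when analysing the remaining moves.

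The substantive part is the analysis of the moves involving an $n$-valent vertex, for each $n=n_i\in\Lambda$, and of the vertex rotation. First I would treat the move in which a single arc is pushed across one edge incident to the vertex (the $n$-valent analogue of $TR_1$): writing out the crossing relation and the vertex relation of Fig.~\ref{fig:proper_n_col} before and after the move, the emerging colours are forced, and compatibility of the resulting relations is precisely conditions (1) (so that the $n$-fold iterated operation along the edges collapses to a single $*_{\Gamma_i(\dots)}$) and (2) (the substitution $h_2\mapsto h_1\otimes h_2$ inside $\Gamma_i$). Next comes the move in which an arc is pushed over, or under, \emph{all} $n$ edges at the vertex at once (the analogue of $TR_2$); here the total number of applications of $f$ differs between the two sides, and reconciling the two expressions is the core of the argument. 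Splitting every relation on $(\mathcal{A},\cdot)$ into its $X$-component and its $G$-component, the $G$-component equalities reduce to twisted associativity (condition (4)) and distributivity (condition (6)); the $X$-component equalities reduce, after collapsing iterated operations via condition (1) and using that $f$ depends only on its second argument (condition (3)), to the ``additivity'' $f(\Gamma_i(g_1,\dots,g_n))=\Gamma_i(f(g_1),\dots,f(g_n))$ of condition (5). Finally, the vertex rotation permutes the edges cyclically and reorients them; re-expressing the reoriented colours through the good involution, the batch of relations produced is exactly the cyclic family of identities in condition (7).

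I expect the $TR_2$-type move and the vertex rotation to be the main obstacles: for general $n$ the bookkeeping of the $n$ crossings created or destroyed is more intricate than in the trivalent case, and one must check that the $n$ equalities produced by the rotation (one per cyclic shift) are jointly equivalent to condition (7), and that the orientation freedom supplied by the good involution is genuinely compatible with the $n$-valent vertex colouring rule of Fig.~\ref{fig:proper_n_col}. Once every move is seen to induce a bijection on proper colourings, the number of such colourings is a well-defined invariant of spatial graphs with vertices of valencies $n_1,\dots,n_k$; for $n=2$ the whole argument specialises to the proof of Theorem~\ref{thm:tri-graph-inv}. Read off at the level of generators and relations, the same computation shows that the associated fundamental quandle is an invariant, and the colouring number is then the number of quandle homomorphisms from it into the chosen associated quandle.
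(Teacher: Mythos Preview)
Your proposal is correct and follows essentially the same approach as the paper: the paper states that the theorem ``directly follows from construction'' in parallel with Theorem~\ref{thm:tri-graph-inv}, and your plan to rerun that proof move by move, matching each $n$-compatibility condition to the analogous trivalent condition, is exactly what is intended. Your attribution of conditions to moves (conditions (1)--(2) to the $TR_1$-analogue, (3)--(6) to the $TR_2$-analogue, (7) to the vertex rotation) agrees with the trivalent case once the numbering shift is taken into account.
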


\begin{figure}
\centering\includegraphics[width=250pt]{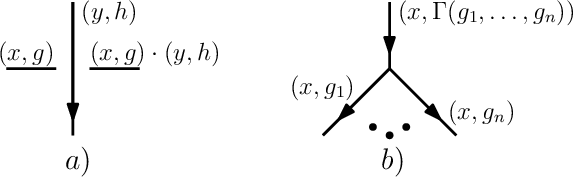}
\caption{Colouring rules for $n$-valent graph diagrams}\label{fig:proper_n_col}
\end{figure}

It turns out that trivalent-compatible $(X,G,\{*_g\},f,\otimes,\oplus)$-systems not only present an example of $\Lambda$-compatible systems for $\Lambda=\{2\}$, but also may be used to construct $\Lambda$-compatible systems for arbitrary $\Lambda$.

\begin{theorem}
	Consider a trivalent-compatible $(X,G,\{*_g\},f,\otimes,\oplus)$-system with associative composition function. Set $\Gamma(g_1,\dots,g_n)=((g_1\oplus g_2)\oplus\dots)\oplus g_n$. Then $(X,G,\{*_g\},f,\otimes,\oplus)$-system is $n$-compatible.
\end{theorem}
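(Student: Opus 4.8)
The plan is to verify each of the seven $n$-compatibility conditions for $\Gamma(g_1,\dots,g_n) := ((g_1 \oplus g_2)\oplus \dots)\oplus g_n$, deriving each from the corresponding trivalent-compatibility condition (Definition~\ref{def:tr-comp}) together with the associativity of $\oplus$ (Definition~\ref{def:assoc}). Since $\oplus$ is associative, we may drop all parentheses and simply write $\Gamma(g_1,\dots,g_n) = g_1 \oplus g_2 \oplus \dots \oplus g_n$; this makes an inductive argument on $n$ natural, with the base case $n=2$ being exactly the trivalent-compatible system as already noted in the excerpt.

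First I would handle condition (1): by Definition~\ref{abg-system+} we have $(x\circ_g y)\circ_h y = x\circ_{g\oplus h} y$, and iterating this $n-1$ times, using associativity of $\oplus$ to collapse the nested composition, gives $((x\circ_{g_1} y)\circ_{g_2}\dots)\circ_{g_n} y = x\circ_{g_1\oplus\dots\oplus g_n} y = x\circ_{\Gamma(g_1,\dots,g_n)} y$. Condition (3) is immediate since it is literally the hypothesis that $f$ depends only on its second argument, inherited verbatim. Conditions (2), (4), (5), (6) each follow by a short induction: for instance (4), twisted associativity $h\otimes(g\oplus g') = (h\otimes g)\otimes g'$, applied repeatedly along $g_1\oplus\dots\oplus g_n$, yields $h\otimes\Gamma(g_1,\dots,g_n) = ((h\otimes g_1)\otimes\dots)\otimes g_n$; (5) uses additivity $f(g\oplus g') = f(g)\oplus f(g')$ inductively to get $f(\Gamma(g_1,\dots,g_n)) = f(g_1)\oplus\dots\oplus f(g_n) = \Gamma(f(g_1),\dots,f(g_n))$; (6) uses distributivity $(u\oplus v)\otimes g = (u\otimes g)\oplus(v\otimes g)$ the same way. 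For (2) one checks $\Gamma(h_1,h_2,g_1,\dots,g_{n-2}) = (h_1\oplus h_2)\oplus g_1\oplus\dots = h_1\oplus(h_1\otimes h_2)\oplus g_1\oplus\dots = \Gamma(h_1,h_1\otimes h_2,g_1,\dots,g_{n-2})$, where the middle step is exactly condition~1 of Definition~\ref{def:tr-comp}, namely $h\oplus(g\otimes h) = g\oplus h$ with $(g,h)$ replaced by $(h_1,h_2)$ — wait, more carefully: condition~1 reads $h\oplus(g\otimes h) = g\oplus h$, so $h_1\oplus(h_2\otimes h_1)$... I would instead apply condition~1 in the form that makes $h_1\oplus h_2 = h_1\oplus(h_1\otimes h_2)$ follow directly, re-deriving it from $h\oplus(g\otimes h)=g\oplus h$ by the appropriate substitution, and then prepend $\oplus g_1\oplus\dots\oplus g_{n-2}$ to both sides using associativity.

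The step I expect to require the most care is condition (7), the $\rho$-compatibility relation $\rho_x(g_{n-i}) = \Gamma(\rho_x(g_{n-i+1}),\dots,\rho_x(g_{n+1}),g_1,\dots,g_{n-i-1})$ where $g_{n+1} = \Gamma(g_1,\dots,g_n)$. Here the two available ingredients are the two halves of condition~6 of Definition~\ref{def:tr-comp}: $h\oplus\rho_x(g\oplus h) = \rho_x(g)$ and $\rho_x(g\oplus h)\oplus g = \rho_x(h)$. The strategy is to prove (7) by a downward induction on $i$ (or equivalently to telescope): writing $s_k := g_1\oplus\dots\oplus g_k$ so that $g_{n+1} = s_n$ and $g_{k+1} = $ the single group element, one shows that for each $i$, $\rho_x(g_{n-i})$ equals the $\oplus$-product $\rho_x(g_{n-i+1})\oplus\dots\oplus\rho_x(g_{n+1})\oplus g_1\oplus\dots\oplus g_{n-i-1}$ by repeatedly pulling terms across $\rho_x$ using the two identities of condition~6, each application converting one $\rho_x(\text{product})$ into a shorter expression. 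This is the combinatorial heart of the argument; I would verify the $n=3$ or $n=4$ case explicitly to fix the bracketing conventions and then write the general telescoping induction. Once all seven conditions are checked, $n$-compatibility holds by definition, completing the proof.
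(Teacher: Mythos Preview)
Your proposal is correct and follows exactly the approach the paper takes: the paper's proof is a one-liner stating that the $n$-compatibility conditions ``can easily be checked directly and follow from the parallel conditions holding for $g\oplus h$,'' and your outline is precisely the induction/telescoping that fills in this verification condition by condition. Your hesitation over condition~(2) is understandable---the substitution needed to match it with trivalent condition~1 is not literally the one you first wrote---but the paper explicitly asserts the $n=2$ case coincides with trivalent compatibility, so the discrepancy is a matter of bookkeeping (or a minor typo in the statement of condition~(2)) rather than a gap in your strategy.
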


\begin{proof}
	We need to verify if the conditions of $n$-compatibility are satisfied. That can easily be checked directly and follows from the parallel conditions holding for $g\oplus h$ for all $g,h\in G$.
\end{proof}


\section{Open problems} \label{sec:questions}

\begin{question} 
Which invariants of spatial graphs can be transformed into invariants of handlebody-links? In particular, in subsection \ref{QuandleSpaGr} we provide a construction of fundamental quandle for spatial graphs. Is it possible to give a similar construction for handlebody-links?
\end{question} 

\begin{question}  
What solutions to the Yang-Baxter equations can you find using our quandle extensions? 
\end{question}

\begin{question}
What are some simple and useful examples of $(G,*,f)$-families of quandles? 
\end{question}

\begin{question}
 Niebrzydowski \cite{N} constructed fundamental quandle of spatial graph. If we take the quotient of this quandle by $SR$-move, we get invariant of handlebody-links. Is this   invariant strong?
 \end{question}


\begin{question}
What are the connections between our constructions with other constructions of quandle extensions? Quandle extensions were studied, for example, in \cite{A} and \cite{E}.
\end{question}

As we know that there exist certain functors from the category of groups to the category of quandles. For example, $\Conj$, $\Core$, $\Alex$ and some other.

\begin{question}
Are there  functors from category of $I$-multi-groups to the category of $I$-multi-quandles?
\end{question}

\begin{ack}
This work has been supported by the grant the Russian Science Foundation, RSF 24-21-00102, https://rscf.ru/project/24-21-00102/.

The authors would like to thank Andrei Malyutin, Sergey Melikhov, Kostya Storozhuk, and Matvey Zonov for useful comments, discussions, and suggestions.
\end{ack}
\medskip

\end{document}